\newtheorem{theorem}{Theorem}[section]
\newtheorem{lemma}[theorem]{Lemma}
\newtheorem{observation}[theorem]{Observation}
\newtheorem{corollary}[theorem]{Corollary}
\newtheorem{proposition}[theorem]{Proposition}
\newtheorem{sublemma}{}[theorem]
\theoremstyle{definition}
\newtheorem{example}[theorem]{Example}
\theoremstyle{remark}
\numberwithin{equation}{section}
\newcommand{\romannum}[1]{\romannumeral#1\relax}
\newcommand{\cN}{\mathcal{N}}
\newcommand{\supp}{\text{supp}}
\long \def \ignore #1 {}
\title{Reconstruction of $C_4$-Free Graphs from the Set of Closed Neighborhoods and Digital Convexity}
\author{Steffen Borgwardt}
\address{Department of Mathematical and Statistical Sciences\\
University of Colorado Denver\\
Denver, CO}
\email{steffen.borgwardt@ucdenver.edu}
\author{MacKenzie Carr}
\address{Department of Mathematics\\ Toronto Metropolitan University \\ Toronto, ON}
\email{mackenzie.carr@torontomu.ca}
\author{Ce Chen}
\address{Mathematics Department\\
University of Illinois Urbana-Champaign \\ Urbana, IL}
\email{cechen4@illinois.edu}
\author{Wayne Ge}
\address{Mathematics Department \\
Louisiana State University \\ Baton Rouge, LA}
\email{yge4@lsu.edu}
\author{Stephen G. Hartke}
\address{Department of Mathematical and Statistical Sciences\\
University of Colorado Denver\\
Denver, CO}
\email{stephen.hartke@ucdenver.edu}
\author{Yixuan Huang}
\address{Mathematics Department \\
Vanderbilt University \\ Nashville, TN}
\email{yixuan.huang.2@vanderbilt.edu}
\author{Alex Moon}
\address{Department of Mathematics \\ Dartmouth College \\ Hanover, NH}
\email{alexander.j.moon.gr@dartmouth.edu}
\date{}
\subjclass{05C60, 05C75}
\keywords{Reconstruction, digitally convex sets, closed neighborhoods.}
\begin{document}

\begin{abstract}
    Fomin, Kratochv\'il, Lokshtanov, Mancini, and Telle showed that every $C_{4}$-free graph is reconstructible from the \emph{multiset} of closed neighborhoods.  We strengthen their result proving that every $C_{4}$-free graph is reconstructible from the \emph{set} of closed neighborhoods. 
    
    This extends the work of Lafrance et al.\ by showing that all $C_{4}$-free graphs, and hence all graphs of girth at least five, are reconstructible from their digitally convex sets.
    A subset $S$ of vertices in a graph $G$ is digitally convex if, for every vertex $v \notin S$, there is a private neighbor of $v$.
    We establish that reconstruction from digitally convex sets is equivalent to reconstruction from the set of closed neighborhoods.
\end{abstract}

\maketitle

\section{Introduction}

Throughout this paper, we use the terminology in \cite{Bondy-Murty}, and all graphs are simple and finite. Let $G = (V,E)$ be a graph and $S$ be a vertex subset of $G$.
The \textit{(open) neighborhood} $N_G(v)$ of a vertex $v$ in $G$ is the set of vertices that are adjacent to $v$. We drop the subscript when the host graph is clear from the context. The \textit{closed neighborhood} $N_G[v]$ of $v$ in $G$ is the set $\{v\}\cup N_G(v)$, and the \textit{closed neighborhood} $N_G[S]$ of a set of vertices $S$ in $G$ is $N_G[S] = \cup_{v \in S} N_G[v]$. Given a graph $G$, the {\it open neighborhoods of $G$}, denoted by $\mathcal{N}(G)$, is the multiset $\{N_G(v) : v \in V(G)\}$. Analogously, the {\it closed neighborhoods of $G$}, denoted by $\mathcal{N}[G]$, is the multiset $\{N_G[v] : v \in V(G)\}$.
For a multiset $M$, we define the {\it support of $M$}, denoted by $\supp(M)$, to be the set of elements with positive multiplicity in $M$. In particular, $\text{supp}(\mathcal{N}[G]) = \{M \subseteq V(G) : M = N[v] \text{ for some } v \in V(G)\}$.

Reconstructing a graph from its partial information dates back to the 1940s. A graph $G$ is {\it reconstructible from a parameter $p(G)$} if, for every graph $H$, the equality $p(G)=p(H)$ implies that $G\cong H$. We say a labeled graph $G$ is {\it strongly reconstructible from a parameter $p(G)$} if, for every labeled graph $H$ on the same vertex set as $G$, the equality $p(G)=p(H)$ implies that $G=H$.
Let $G$ be a graph with vertex set $V$ and edge set $E$.
For each vertex $v\in V$, the induced subgraph $G-v$ is called a {\it vertex-deleted subgraph of $G$}. The {\it deck} of a graph $G$ is the multiset $D(G)$ of isomorphism classes of all vertex-deleted subgraphs of $G$. The famous reconstruction conjecture by Kelly \cite{Kelly} and Ulam \cite{Ulam} states that every graph $G$ on at least three vertices is reconstructible from its deck $D(G)$. Harary \cite{Harary} suggested a stronger conjecture saying that every graph $G$ on at least four vertices is reconstructible from the support of its deck $\supp(D(G))$. Both conjectures remain open; see \cite{Bondy,HararySurvey,vonRimscha} for more information.

Beyond these two famous reconstruction conjectures, there has been extensive research on reconstructing graphs from other graph parameters. In particular, when $G$ is a labeled graph, the question extends to whether the original labeled graph can be uniquely determined from a parameter $p(G)$; that is, whether $G$ is strongly reconstructible from $p(G)$.
Some studies focus on determining when a graph $G$ is reconstructible from its open neighborhoods $\mathcal{N}(G)$. First, we note that not all graphs are reconstructible from their open neighborhoods. Consider a 6-vertex cycle $C_6= v_1v_2v_3v_4v_5v_6v_1$ and a disjoint union of two triangles $2K_3= v_1v_3v_5+v_2v_4v_6$ (see Figure~\ref{fig_C_6_and_2K_3}). It is straightforward to check that they are non-isomorphic but have the same open neighborhoods. Aigner and Triesch~\cite{AignerTriesch1993} showed that the decision problem of whether a given multiset is the open neighborhoods of a graph is NP-complete, and they proved that each forest $F$ is reconstructible from the open neighborhoods $\mathcal{N}(F)$. Hammack and Mullican~\cite{Hammack-Mullican} proved that a graph $G$ is reconstructible from its open neighborhoods if and only if $G$ is a cancellation graph.
Moreover, they provided some sufficient conditions for a graph to be reconstructible from its open neighborhoods.

 \begin{figure}[htb]
\hbox to \hsize{
\hfil
\resizebox{9cm}{!}{\tikzset{every picture/.style={line width=0.75pt}} 

\begin{tikzpicture}[x=0.75pt,y=0.75pt,yscale=-1,xscale=1]

\draw  [fill={rgb, 255:red, 0; green, 0; blue, 0 }  ,fill opacity=1 ] (114.12,95.65) .. controls (114.12,93.06) and (116.22,90.96) .. (118.8,90.96) .. controls (121.38,90.96) and (123.48,93.06) .. (123.48,95.65) .. controls (123.48,98.23) and (121.38,100.33) .. (118.8,100.33) .. controls (116.22,100.33) and (114.12,98.23) .. (114.12,95.65) -- cycle ;
\draw  [fill={rgb, 255:red, 0; green, 0; blue, 0 }  ,fill opacity=1 ] (114.12,181.55) .. controls (114.12,178.97) and (116.22,176.87) .. (118.8,176.87) .. controls (121.38,176.87) and (123.48,178.97) .. (123.48,181.55) .. controls (123.48,184.14) and (121.38,186.24) .. (118.8,186.24) .. controls (116.22,186.24) and (114.12,184.14) .. (114.12,181.55) -- cycle ;
\draw  [fill={rgb, 255:red, 0; green, 0; blue, 0 }  ,fill opacity=1 ] (89.32,138.6) .. controls (89.32,136.02) and (91.42,133.92) .. (94,133.92) .. controls (96.58,133.92) and (98.68,136.02) .. (98.68,138.6) .. controls (98.68,141.18) and (96.58,143.28) .. (94,143.28) .. controls (91.42,143.28) and (89.32,141.18) .. (89.32,138.6) -- cycle ;
\draw  [fill={rgb, 255:red, 0; green, 0; blue, 0 }  ,fill opacity=1 ] (163.72,95.65) .. controls (163.72,93.06) and (165.82,90.96) .. (168.4,90.96) .. controls (170.98,90.96) and (173.08,93.06) .. (173.08,95.65) .. controls (173.08,98.23) and (170.98,100.33) .. (168.4,100.33) .. controls (165.82,100.33) and (163.72,98.23) .. (163.72,95.65) -- cycle ;
\draw  [fill={rgb, 255:red, 0; green, 0; blue, 0 }  ,fill opacity=1 ] (163.72,181.55) .. controls (163.72,178.97) and (165.82,176.87) .. (168.4,176.87) .. controls (170.98,176.87) and (173.08,178.97) .. (173.08,181.55) .. controls (173.08,184.14) and (170.98,186.24) .. (168.4,186.24) .. controls (165.82,186.24) and (163.72,184.14) .. (163.72,181.55) -- cycle ;
\draw  [fill={rgb, 255:red, 0; green, 0; blue, 0 }  ,fill opacity=1 ] (188.52,138.6) .. controls (188.52,136.02) and (190.62,133.92) .. (193.2,133.92) .. controls (195.78,133.92) and (197.88,136.02) .. (197.88,138.6) .. controls (197.88,141.18) and (195.78,143.28) .. (193.2,143.28) .. controls (190.62,143.28) and (188.52,141.18) .. (188.52,138.6) -- cycle ;
\draw   (193.2,138.6) -- (168.4,181.55) -- (118.8,181.55) -- (94,138.6) -- (118.8,95.65) -- (168.4,95.65) -- cycle ;

\draw  [fill={rgb, 255:red, 0; green, 0; blue, 0 }  ,fill opacity=1 ] (398.54,69.6) .. controls (398.54,67.02) and (400.64,64.92) .. (403.23,64.92) .. controls (405.81,64.92) and (407.91,67.02) .. (407.91,69.6) .. controls (407.91,72.18) and (405.81,74.28) .. (403.23,74.28) .. controls (400.64,74.28) and (398.54,72.18) .. (398.54,69.6) -- cycle ;
\draw  [fill={rgb, 255:red, 0; green, 0; blue, 0 }  ,fill opacity=1 ] (445.27,123) .. controls (445.27,120.42) and (447.37,118.32) .. (449.95,118.32) .. controls (452.53,118.32) and (454.63,120.42) .. (454.63,123) .. controls (454.63,125.58) and (452.53,127.68) .. (449.95,127.68) .. controls (447.37,127.68) and (445.27,125.58) .. (445.27,123) -- cycle ;
\draw  [fill={rgb, 255:red, 0; green, 0; blue, 0 }  ,fill opacity=1 ] (351.82,123) .. controls (351.82,120.42) and (353.92,118.32) .. (356.5,118.32) .. controls (359.08,118.32) and (361.18,120.42) .. (361.18,123) .. controls (361.18,125.58) and (359.08,127.68) .. (356.5,127.68) .. controls (353.92,127.68) and (351.82,125.58) .. (351.82,123) -- cycle ;
\draw   (403.23,69.6) -- (449.95,123) -- (356.5,123) -- cycle ;

\draw  [fill={rgb, 255:red, 0; green, 0; blue, 0 }  ,fill opacity=1 ] (398.54,152.6) .. controls (398.54,150.02) and (400.64,147.92) .. (403.23,147.92) .. controls (405.81,147.92) and (407.91,150.02) .. (407.91,152.6) .. controls (407.91,155.18) and (405.81,157.28) .. (403.23,157.28) .. controls (400.64,157.28) and (398.54,155.18) .. (398.54,152.6) -- cycle ;
\draw  [fill={rgb, 255:red, 0; green, 0; blue, 0 }  ,fill opacity=1 ] (445.27,206) .. controls (445.27,203.42) and (447.37,201.32) .. (449.95,201.32) .. controls (452.53,201.32) and (454.63,203.42) .. (454.63,206) .. controls (454.63,208.58) and (452.53,210.68) .. (449.95,210.68) .. controls (447.37,210.68) and (445.27,208.58) .. (445.27,206) -- cycle ;
\draw  [fill={rgb, 255:red, 0; green, 0; blue, 0 }  ,fill opacity=1 ] (351.82,206) .. controls (351.82,203.42) and (353.92,201.32) .. (356.5,201.32) .. controls (359.08,201.32) and (361.18,203.42) .. (361.18,206) .. controls (361.18,208.58) and (359.08,210.68) .. (356.5,210.68) .. controls (353.92,210.68) and (351.82,208.58) .. (351.82,206) -- cycle ;
\draw   (403.23,152.6) -- (449.95,206) -- (356.5,206) -- cycle ;

\draw (102,82.4) node [anchor=north west][inner sep=0.75pt]    {$1$};
\draw (74,134.9) node [anchor=north west][inner sep=0.75pt]    {$2$};
\draw (102,187.4) node [anchor=north west][inner sep=0.75pt]    {$3$};
\draw (175.08,187.4) node [anchor=north west][inner sep=0.75pt]    {$4$};
\draw (203,134.9) node [anchor=north west][inner sep=0.75pt]    {$5$};
\draw (175.08,82.4) node [anchor=north west][inner sep=0.75pt]    {$6$};
\draw (398.5,51.4) node [anchor=north west][inner sep=0.75pt]    {$1$};
\draw (336.5,125.4) node [anchor=north west][inner sep=0.75pt]    {$3$};
\draw (458.5,125.4) node [anchor=north west][inner sep=0.75pt]    {$5$};
\draw (458.5,208.4) node [anchor=north west][inner sep=0.75pt]    {$6$};
\draw (336.5,208.4) node [anchor=north west][inner sep=0.75pt]    {$4$};
\draw (398.5,134.4) node [anchor=north west][inner sep=0.75pt]    {$2$};

\end{tikzpicture}}%
\hfil
}
\caption{Two non-isomorphic graphs that have the same set of open neighborhoods.}
\label{fig_C_6_and_2K_3}
\end{figure}
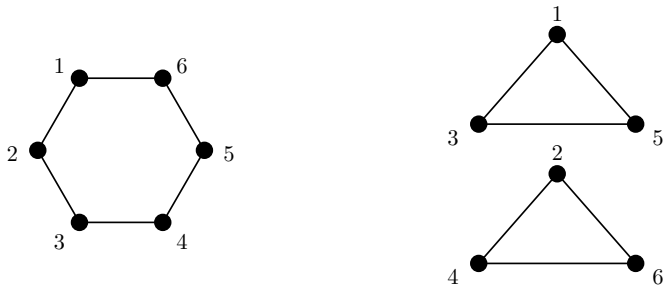

Analogously, it is natural to ask when a graph $G$ is reconstructible from its closed neighborhoods $\mathcal{N}[G]$. We also note that there are infinitely many pairs of non-isomorphic graphs that share the same multiset of closed neighborhoods. For instance, the graphs $K_{3,3}$ and the triangular prism $C_3 \square K_2$, with the vertex labelings shown in Figure \ref{fig_K3,3_and_prism}, have the same multiset of closed neighborhoods.

 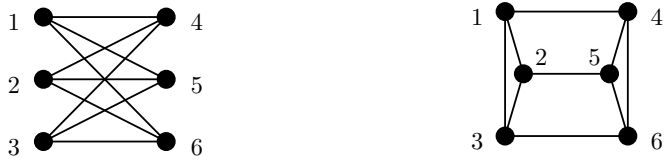
\begin{figure}[htb]
\hbox to \hsize{
\hfil
\resizebox{9cm}{!}{\tikzset{every picture/.style={line width=0.75pt}} 

\begin{tikzpicture}[x=0.75pt,y=0.75pt,yscale=-1,xscale=1]

\draw  [fill={rgb, 255:red, 0; green, 0; blue, 0 }  ,fill opacity=1 ] (98.32,114.2) .. controls (98.32,111.62) and (100.42,109.52) .. (103,109.52) .. controls (105.58,109.52) and (107.68,111.62) .. (107.68,114.2) .. controls (107.68,116.78) and (105.58,118.88) .. (103,118.88) .. controls (100.42,118.88) and (98.32,116.78) .. (98.32,114.2) -- cycle ;
\draw  [fill={rgb, 255:red, 0; green, 0; blue, 0 }  ,fill opacity=1 ] (98.32,182.2) .. controls (98.32,179.62) and (100.42,177.52) .. (103,177.52) .. controls (105.58,177.52) and (107.68,179.62) .. (107.68,182.2) .. controls (107.68,184.78) and (105.58,186.88) .. (103,186.88) .. controls (100.42,186.88) and (98.32,184.78) .. (98.32,182.2) -- cycle ;
\draw  [fill={rgb, 255:red, 0; green, 0; blue, 0 }  ,fill opacity=1 ] (98.32,148.2) .. controls (98.32,145.62) and (100.42,143.52) .. (103,143.52) .. controls (105.58,143.52) and (107.68,145.62) .. (107.68,148.2) .. controls (107.68,150.79) and (105.58,152.88) .. (103,152.88) .. controls (100.42,152.88) and (98.32,150.79) .. (98.32,148.2) -- cycle ;

\draw  [fill={rgb, 255:red, 0; green, 0; blue, 0 }  ,fill opacity=1 ] (165.32,114.2) .. controls (165.32,111.62) and (167.42,109.52) .. (170,109.52) .. controls (172.58,109.52) and (174.68,111.62) .. (174.68,114.2) .. controls (174.68,116.78) and (172.58,118.88) .. (170,118.88) .. controls (167.42,118.88) and (165.32,116.78) .. (165.32,114.2) -- cycle ;
\draw  [fill={rgb, 255:red, 0; green, 0; blue, 0 }  ,fill opacity=1 ] (165.32,182.2) .. controls (165.32,179.62) and (167.42,177.52) .. (170,177.52) .. controls (172.58,177.52) and (174.68,179.62) .. (174.68,182.2) .. controls (174.68,184.78) and (172.58,186.88) .. (170,186.88) .. controls (167.42,186.88) and (165.32,184.78) .. (165.32,182.2) -- cycle ;
\draw  [fill={rgb, 255:red, 0; green, 0; blue, 0 }  ,fill opacity=1 ] (165.32,148.2) .. controls (165.32,145.62) and (167.42,143.52) .. (170,143.52) .. controls (172.58,143.52) and (174.68,145.62) .. (174.68,148.2) .. controls (174.68,150.79) and (172.58,152.88) .. (170,152.88) .. controls (167.42,152.88) and (165.32,150.79) .. (165.32,148.2) -- cycle ;

\draw  [fill={rgb, 255:red, 0; green, 0; blue, 0 }  ,fill opacity=1 ] (350.32,111.2) .. controls (350.32,108.62) and (352.42,106.52) .. (355,106.52) .. controls (357.58,106.52) and (359.68,108.62) .. (359.68,111.2) .. controls (359.68,113.78) and (357.58,115.88) .. (355,115.88) .. controls (352.42,115.88) and (350.32,113.78) .. (350.32,111.2) -- cycle ;
\draw  [fill={rgb, 255:red, 0; green, 0; blue, 0 }  ,fill opacity=1 ] (350.32,179.2) .. controls (350.32,176.62) and (352.42,174.52) .. (355,174.52) .. controls (357.58,174.52) and (359.68,176.62) .. (359.68,179.2) .. controls (359.68,181.78) and (357.58,183.88) .. (355,183.88) .. controls (352.42,183.88) and (350.32,181.78) .. (350.32,179.2) -- cycle ;
\draw  [fill={rgb, 255:red, 0; green, 0; blue, 0 }  ,fill opacity=1 ] (360.32,145.2) .. controls (360.32,142.62) and (362.42,140.52) .. (365,140.52) .. controls (367.58,140.52) and (369.68,142.62) .. (369.68,145.2) .. controls (369.68,147.79) and (367.58,149.88) .. (365,149.88) .. controls (362.42,149.88) and (360.32,147.79) .. (360.32,145.2) -- cycle ;
\draw  [fill={rgb, 255:red, 0; green, 0; blue, 0 }  ,fill opacity=1 ] (417.32,111.2) .. controls (417.32,108.62) and (419.42,106.52) .. (422,106.52) .. controls (424.58,106.52) and (426.68,108.62) .. (426.68,111.2) .. controls (426.68,113.78) and (424.58,115.88) .. (422,115.88) .. controls (419.42,115.88) and (417.32,113.78) .. (417.32,111.2) -- cycle ;
\draw  [fill={rgb, 255:red, 0; green, 0; blue, 0 }  ,fill opacity=1 ] (417.32,179.2) .. controls (417.32,176.62) and (419.42,174.52) .. (422,174.52) .. controls (424.58,174.52) and (426.68,176.62) .. (426.68,179.2) .. controls (426.68,181.78) and (424.58,183.88) .. (422,183.88) .. controls (419.42,183.88) and (417.32,181.78) .. (417.32,179.2) -- cycle ;
\draw  [fill={rgb, 255:red, 0; green, 0; blue, 0 }  ,fill opacity=1 ] (407.32,145.2) .. controls (407.32,142.62) and (409.42,140.52) .. (412,140.52) .. controls (414.58,140.52) and (416.68,142.62) .. (416.68,145.2) .. controls (416.68,147.79) and (414.58,149.88) .. (412,149.88) .. controls (409.42,149.88) and (407.32,147.79) .. (407.32,145.2) -- cycle ;
\draw    (103,114.2) -- (170,114.2) ;
\draw    (103,148.2) -- (170,148.2) ;
\draw    (103,182.2) -- (170,182.2) ;
\draw    (103,114.2) -- (170,148.2) ;
\draw    (103,114.2) -- (170,182.2) ;
\draw    (103,148.2) -- (170,114.2) ;
\draw    (103,182.2) -- (170,114.2) ;
\draw    (103,148.2) -- (170,182.2) ;
\draw    (103,182.2) -- (170,148.2) ;
\draw    (355,111.2) -- (422,111.2) ;
\draw    (355,179.2) -- (422,179.2) ;
\draw    (365,145.2) -- (412,145.2) ;
\draw    (355,111.2) -- (355,179.2) ;
\draw    (422,111.2) -- (422,179.2) ;
\draw    (355,111.2) -- (365,145.2) ;
\draw    (355,179.2) -- (365,145.2) ;
\draw    (412,145.2) -- (422,111.2) ;
\draw    (412,145.2) -- (422,179.2) ;

\draw (82,111.4) node [anchor=north west][inner sep=0.75pt]    {$1$};
\draw (82,145.4) node [anchor=north west][inner sep=0.75pt]    {$2$};
\draw (82,179.4) node [anchor=north west][inner sep=0.75pt]    {$3$};
\draw (182,111.4) node [anchor=north west][inner sep=0.75pt]    {$4$};
\draw (182,145.4) node [anchor=north west][inner sep=0.75pt]    {$5$};
\draw (182,179.4) node [anchor=north west][inner sep=0.75pt]    {$6$};
\draw (335,108.4) node [anchor=north west][inner sep=0.75pt]    {$1$};
\draw (370,130.4) node [anchor=north west][inner sep=0.75pt]    {$2$};
\draw (335,176.4) node [anchor=north west][inner sep=0.75pt]    {$3$};
\draw (433,108.4) node [anchor=north west][inner sep=0.75pt]    {$4$};
\draw (399,130.4) node [anchor=north west][inner sep=0.75pt]    {$5$};
\draw (433,176.4) node [anchor=north west][inner sep=0.75pt]    {$6$};

\end{tikzpicture}}%
\hfil
}
\caption{Two non-isomorphic graphs that have the same set of closed neighborhoods.}
\label{fig_K3,3_and_prism}
\end{figure}

Let $H$ be a graph. A graph is {\it $H$-free} if it does not contain $H$ as an induced subgraph. 
Fomin et al.~\cite{Fomin} studied the complexity of the realizability problem on the multiset of closed neighborhoods.
They showed that for a graph $H \in \{P_k, C_k\}$, where $P_k,C_k$ are paths and cycles on $k$ vertices respectively, given $n$ subsets $\mathcal{S} = \{S_1,S_2,\dots,S_n\}$ of an $n$-element set $V$, it is polynomial-time solvable to determine whether there is an $H$-free graph $G$ with $\mathcal{N}[G] = \mathcal{S}$ for $k \le 4$, and NP-complete for $k > 4$. Their proof for $C_4$-free graphs implies that every $C_4$-free graph is strongly reconstructible from its closed neighborhoods.  For the sake of completeness, we provide an alternative proof of this result in Section~\ref{sec:alter proof}.

\begin{theorem}[Fomin, Kratochv\'il, Lokshtanov, Mancini, and Telle~\cite{Fomin}]
    \label{thm: C4 multiset}
    If $G$ is a $C_4$-free graph, then $G$ is strongly reconstructible from its closed neighborhoods $\mathcal{N}[G]$.
\end{theorem}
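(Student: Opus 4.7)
The plan is to argue by contradiction: assume $H$ is a labeled graph on the vertex set of $G$ with $\mathcal{N}[H] = \mathcal{N}[G]$ but $H \neq G$, and produce an induced $C_4$ in $G$. The multiset equality first supplies a bijection $\sigma \colon V(G) \to V(G)$ with $N_G[v] = N_H[\sigma(v)]$ for every $v$. Combining this with the symmetry of the two adjacency-plus-identity matrices, one derives a \emph{twisted symmetry} of $G$:
\[
u \sim_G x \iff \sigma(u) \sim_G \sigma^{-1}(x) \qquad \text{for all } u,x \in V(G).
\]
Applying this identity to the diagonal relations $v \in N_G[v]$, together with the basic fact that $v \sim_G \sigma(v)$ whenever $\sigma(v) \neq v$, then forces every non-trivial $\sigma$-orbit $\{v_0, v_1, \ldots, v_{k-1}\}$ to induce a clique in $G$: the value of the adjacency indicator at $(v_i, v_j)$ depends only on $(i+j) \bmod k$, and the two known families of adjacencies together cover every residue modulo $k$.

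Since $H \neq G$, the permutation $\sigma$ cannot act within $G$-twin classes, so some $v_0$ satisfies $v_1 := \sigma(v_0) \neq v_0$ and $N_G[v_0] \neq N_G[v_1]$. Because $v_0 \sim_G v_1$, there is a witness $w \notin \{v_0, v_1\}$ with $w \sim_G v_0$ but $w \not\sim_G v_1$ (after possibly exchanging the roles of $v_0$ and $v_1$). Twisted symmetry applied to the pair $(v_0, w)$ immediately rules out $w$ being a fixed point of $\sigma$, so $w$ lies in a non-trivial orbit $O_w = \{w_0 = w, w_1, \ldots, w_{\ell-1}\}$, which is a clique of $G$ and necessarily disjoint from the orbit $O_v = \{v_0, v_1, \ldots, v_{k-1}\}$ of $v_0$.

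The crux is then a class analysis between the two orbits. Setting $d := \gcd(k,\ell)$, twisted symmetry implies that whether $v_i \sim_G w_j$ depends only on $(i+j) \bmod d$, so the cross-orbit edges are partitioned into $d$ uniformly present or absent classes. Class $0$ is present ($v_0 \sim_G w_0$) while class $1$ is absent ($v_1 \not\sim_G w_0$), forcing $d \geq 2$; a cyclic walk around $\mathbb{Z}/d$ then produces a reverse-boundary index $j' \in \{1, \ldots, d-1\}$ with class $j'$ absent and class $j'+1$ present. The four distinct vertices $v_0, w_0, w_{j'}, v_1$ then induce a $C_4$ in $G$: the four cycle edges $v_0 w_0$, $w_0 w_{j'}$, $w_{j'} v_1$, $v_1 v_0$ are all present (coming from class $0$, the $O_w$-clique, class $j'+1$, and the $O_v$-clique, respectively), while the two diagonals $v_0 w_{j'}$ and $v_1 w_0$ are absent (classes $j'$ and $1$). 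I expect the main obstacle to be the orbits-are-cliques step of the first paragraph, where the twisted-symmetry identity must be iterated carefully enough to propagate all diagonal and orbit-edge relations to every pair $v_i, v_j$; this clique structure is also precisely what guarantees that the four-cycle produced at the end has no chords, which is where the $C_4$-free hypothesis on $G$ is ultimately leveraged.
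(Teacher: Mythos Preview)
Your argument is correct and follows a genuinely different route from the paper's. Both proofs set up the permutation $\sigma$ with $N_G[v]=N_H[\sigma(v)]$ and establish that every $\sigma$-orbit is a clique, but the paper does this by a direct induction alternating between $G$ and $H$, while you compress the $G\leftrightarrow H$ interplay into the single identity $u\sim_G x \iff \sigma(u)\sim_G\sigma^{-1}(x)$ and deduce that adjacency within an orbit depends only on $(i+j)\bmod k$. The main divergence is in the endgame: the paper picks an edge $ab\in E(G)\setminus E(H)$, assumes $H$ is $C_4$-free, and shows by induction (using $C_4$-freeness at each step) that $\sigma^i(a)\sim_H\sigma^j(b)$ for all $i,j\geq 1$, eventually forcing $a\sim_H b$, a contradiction. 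You instead work entirely inside $G$, prove that cross-orbit adjacency depends only on $(i+j)\bmod\gcd(k,\ell)$, and read off an explicit induced $C_4$ from a present/absent class boundary. Your approach is more structural and exhibits the four-cycle concretely; the paper's is slightly more elementary (no $\gcd$ analysis) but less explicit about where the $C_4$ actually sits. One small point to tighten: ``exchanging the roles of $v_0$ and $v_1$'' is not an innocent relabeling, since $v_1=\sigma(v_0)$ is a directed relation; the clean fix is to pass from $\sigma$ to $\sigma^{-1}$, which satisfies the same twisted-symmetry identity and has the same orbits, so the rest of your argument runs unchanged.
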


We prove the following stronger result.

\begin{theorem}
    \label{thm: C4 set}
    If $G$ is a $C_4$-free graph, then $G$ is strongly reconstructible from $\text{supp}(\mathcal{N}[G])$.
\end{theorem}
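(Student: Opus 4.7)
The plan is to reduce Theorem~\ref{thm: C4 set} to Theorem~\ref{thm: C4 multiset} by recovering the full multiset $\mathcal{N}[G]$ from $\supp(\mathcal{N}[G])$; equivalently, I must determine for each $S \in \supp(\mathcal{N}[G])$ the multiplicity $m(S) := |\{v \in V(G) : N_G[v] = S\}|$. My strategy is to pass to the twin-reduction of $G$, apply Theorem~\ref{thm: C4 multiset} on the quotient, and then lift back.

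First, I would show that the true-twin partition of $G$ is already visible in $\supp(\mathcal{N}[G])$. For each $v \in V(G)$ set $\mathcal{F}(v) := \{S \in \supp(\mathcal{N}[G]) : v \in S\}$. Because $u \in N_G[v] \iff v \in N_G[u] \iff N_G[u] \in \mathcal{F}(v)$, one obtains $\mathcal{F}(v) = \mathcal{F}(w) \iff N_G[v] = N_G[w]$, so the true-twin classes $T_1, \dots, T_k$ of $G$ can be read off from $\supp(\mathcal{N}[G])$ and $V(G)$ alone. Consequently every $T_i$ is either contained in $S$ or disjoint from $S$ for each $S \in \supp(\mathcal{N}[G])$.

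Second, I would analyze the twin-reduced quotient $G'$ with vertex set $\{T_1,\dots,T_k\}$ and edges $T_i \sim_{G'} T_j$ iff some (equivalently every) $v \in T_i$ is adjacent in $G$ to some (equivalently every) $w \in T_j$. Define the computable family
\begin{equation*}
    \mathcal{S}' := \{N^*(S) : S \in \supp(\mathcal{N}[G])\}, \qquad N^*(S) := \{T_i : T_i \subseteq S\}.
\end{equation*}
Writing $T(S)$ for the twin class of a vertex whose closed neighborhood equals $S$, a short check gives $N^*(S) = N_{G'}[T(S)]$, and since distinct vertices of $G'$ cannot share a closed neighborhood (else their twin classes would have merged in $G$), the family $\mathcal{S}'$ coincides with $\mathcal{N}[G']$ both as a set and as a multiset.

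Finally, I would verify that $G'$ is $C_4$-free: any induced $4$-cycle on $T_a, T_b, T_c, T_d$ in $G'$ lifts, by choosing one vertex per class, to an induced $4$-cycle in $G$. Applying Theorem~\ref{thm: C4 multiset} to the labeled graph $G'$ on $\{T_1,\dots,T_k\}$ then strongly reconstructs $G'$ from the known multiset $\mathcal{S}' = \mathcal{N}[G']$. Knowing $G'$ pins down the bijection $\sigma(T_i) := \bigcup_{T_j \in N_{G'}[T_i]} T_j$ and hence all multiplicities $m(S) = |\sigma^{-1}(S)|$; the edge set of $G$ follows, since distinct $u, v$ are adjacent iff they lie in a common twin class of size at least two or their twin classes are adjacent in $G'$. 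The main obstacle I anticipate is the first step of extracting the true-twin partition from the naked support; once the equivalence $\mathcal{F}(v) = \mathcal{F}(w) \iff N_G[v] = N_G[w]$ is in hand, the lift of $C_4$-freeness to $G'$ and the appeal to Theorem~\ref{thm: C4 multiset} are essentially bookkeeping.
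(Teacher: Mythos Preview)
Your proposal is correct and follows essentially the same route as the paper: pass to the true-twin quotient $G'$, show that $\supp(\mathcal{N}[G])$ determines $\mathcal{N}[G']$ as a multiset, invoke Theorem~\ref{thm: C4 multiset} on the $C_4$-free quotient, and blow back up. The only cosmetic difference is that you recover the twin partition via the criterion $\mathcal{F}(v)=\mathcal{F}(w)$ directly from $\supp(\mathcal{N}[G])$, whereas the paper does so via Corollary~\ref{coro: equal CN}; both arguments are equally short.
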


This reconstruction problem was initially motivated by a problem arising in digital convexity. A \textit{convexity} $\mathscr{C}$ on a finite set $V$ is a collection of subsets of $V$ such that $\{\emptyset, V\} \subseteq \mathscr{C}$ and $\mathscr{C}$ is closed under intersection. For a graph $G$, a set $S$ of vertices of $G$ is {\it digitally convex} if, for each $v\notin S$, there is a vertex $x$ in $N[v]\setminus N[S]$. We call such a vertex $x$ a {\it private neighbor of $v$ with respect to $S$}. The \textit{digital convexity} $\mathscr{D}(G)$ of a graph $G$ is the collection of all digitally convex sets of $G$. One may easily check that the set of digitally convex sets of $G$ is a convexity of $V(G)$. Lafrance, Oellermann, and Pressey \cite{LAFRANCE2017254} studied when a graph $G$ is strongly reconstructible from its digital convexity $\mathscr{D}(G)$. They showed that every tree $T$ is strongly reconstructible from $\mathscr{D}(T)$, and they also pointed out that their proof methods could be generalized to graphs with girth at least seven.

\begin{theorem}[Lafrance, Oellermann, and Pressey~\cite{LAFRANCE2017254}]
\label{thm:DC&tree}%
Every tree $T$ is strongly reconstructible from its digital convexity $\mathscr{D}(T)$.
\end{theorem}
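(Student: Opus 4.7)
My plan is to deduce Theorem~\ref{thm:DC&tree} from Theorem~\ref{thm: C4 set} using the equivalence (announced in the abstract) between reconstruction from digitally convex sets and reconstruction from the set of closed neighborhoods. Concretely, since every tree is $C_4$-free, Theorem~\ref{thm: C4 set} yields that any tree $T$ is strongly reconstructible from $\text{supp}(\mathcal{N}[T])$, and the equivalence then transfers this to strong reconstruction from $\mathscr{D}(T)$.

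The forward direction of the equivalence, that $\text{supp}(\mathcal{N}[G])$ determines $\mathscr{D}(G)$, is immediate: given every closed neighborhood we can compute $N[S] = \bigcup_{u \in S} N[u]$ and directly verify the private-neighbor condition defining digital convexity. The reverse direction, that $\mathscr{D}(G)$ determines $\text{supp}(\mathcal{N}[G])$, proceeds via the closure operator $\text{cl}(S) := \bigcap \{C \in \mathscr{D}(G) : S \subseteq C\}$, which is computable from $\mathscr{D}(G)$ and which a short argument identifies with $\{w \in V : N[w] \subseteq N[S]\}$. The task then becomes assembling each $N[v]$ from these closure data.

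For trees this assembly simplifies considerably: if $N_T[w] \subseteq N_T[v]$ with $w \neq v$, then $w \sim v$, and any other neighbor $u$ of $w$ would also lie in $N_T[v]$ and hence be adjacent to $v$, producing a triangle $uvw$ and contradicting acyclicity; so $w$ must be a leaf of $T$ attached to $v$. Thus $\text{cl}(\{v\}) \setminus \{v\}$ is precisely the set of leaves of $T$ attached to $v$, and both the leaves of $T$ and their attachments are directly readable from $\mathscr{D}(T)$. One can then finish either by iteratively peeling off leaves (after verifying that the digital convexity of the pruned tree is determined by $\mathscr{D}(T)$ and the known leaf set) or, more cleanly, by using the closure data to bootstrap the full $\text{supp}(\mathcal{N}[T])$ and then invoking Theorem~\ref{thm: C4 set}. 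The main obstacle is precisely this reverse direction of the equivalence at internal vertices: showing that $\mathscr{D}(G)$ really determines every closed neighborhood, not only those arising from leaf relations, and that nothing is lost in translating between the two reconstruction frameworks.
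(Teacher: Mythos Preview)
Your proposal is correct and matches the paper's treatment: the paper does not give its own proof of Theorem~\ref{thm:DC&tree} (it is cited as prior work of Lafrance, Oellermann, and Pressey), but derives the stronger Theorem~\ref{thm:DCS_C4-free} exactly as you propose, by combining Theorem~\ref{thm: C4 set} with the equivalence Theorem~\ref{thm:DC=CN-recon}, and Theorem~\ref{thm:DC&tree} is then the special case where $G$ is a tree. Your closure-operator discussion is a valid alternative lens on the reverse direction of Theorem~\ref{thm:DC=CN-recon} (closely related to the paper's Lemma~\ref{lem: inclusion CN} and the union-basis argument of Lemma~\ref{lem: base vtx}), but it is unnecessary once Theorem~\ref{thm:DC=CN-recon} is taken as given.
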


The following equivalence demonstrates the correspondence between digitally convex sets and closed neighborhoods of vertex subsets.

\begin{proposition}
\label{prop:DC&nbhd}
    Given a graph $G$, a vertex subset $S$ is digitally convex if and only if there is a vertex subset $A$ such that $N[A] = V(G) \setminus S$.
\end{proposition}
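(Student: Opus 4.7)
The plan is to verify both implications using the single candidate set $A = V(G) \setminus N[S]$. The proposition is really a bookkeeping statement about the symmetry of the closed-neighborhood relation, and once the right $A$ is chosen the two directions fall out from one short containment argument each.

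For the forward direction, assume $S$ is digitally convex, and set $A = V(G) \setminus N[S]$. I would prove $N[A] = V(G) \setminus S$ by checking both containments. For $N[A] \subseteq V(G) \setminus S$: if some $s \in S$ lay in $N[A]$, then some $a \in A$ would satisfy $a \in N[s] \subseteq N[S]$, contradicting $A \cap N[S] = \emptyset$. For $V(G) \setminus S \subseteq N[A]$: given any $v \notin S$, digital convexity supplies a private neighbor $x \in N[v] \setminus N[S]$; then $x \in A$ and, because $x \in N[v]$ is equivalent to $v \in N[x]$, we get $v \in N[x] \subseteq N[A]$.

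For the backward direction, assume $N[A] = V(G) \setminus S$ and fix an arbitrary $v \notin S$. Since $v \in V(G) \setminus S = N[A]$, there exists $a \in A$ with $v \in N[a]$, equivalently $a \in N[v]$. It remains only to verify $a \notin N[S]$: if $a$ were in $N[S]$ via some $s \in S$, then $s \in N[a] \subseteq N[A] = V(G) \setminus S$, which is absurd. Hence $a$ is a private neighbor of $v$ with respect to $S$, proving $S$ is digitally convex.

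I do not foresee a genuine obstacle; the argument is purely set-theoretic. The only point that requires attention is using the symmetry $u \in N[w] \iff w \in N[u]$ consistently when translating between the statements ``$v$ has a closed neighbor in $A$'' and ``$v \in N[A]$.''
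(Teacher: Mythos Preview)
Your proposal is correct and follows essentially the same approach as the paper: both use $A = V(G)\setminus N[S]$ for the forward direction and the same private-neighbor argument for the converse. Your write-up simply spells out the ``straightforward to check'' containments that the paper leaves implicit.
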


\begin{proof}
    If $S$ is digitally convex, then let $A=V(G)\setminus N[S]$. It is straightforward to check that $N[A] = V(G) \setminus S$.

    Conversely, for each vertex $v$ not in $S$, there is a vertex $a\in A$ such that $v\in N[a]$. Because $N[a]\cap S=\emptyset$, we know $a$ is a private neighbor of $v$ with respect to $S$. Hence $S$ is digitally convex. 
\end{proof}

Note that for a graph $G$, the set $\supp(\{N[A] : A \subseteq V(G)\})$ consists of all possible unions of the sets in $\supp(\cN[G])$. Thus, from $\supp(\cN[G])$ one can easily recover $\supp(\{N[A] : A \subseteq V(G)\})$, and hence recover $\mathscr{D}(G)$. However, the converse is not straightforward. In this paper, we establish the following result.

\begin{theorem}
\label{thm:DC=CN-recon}
A graph $G$ is strongly reconstructible from its digital convexity $\mathscr{D}(G)$ if and only if $G$ is strongly reconstructible from $\text{supp}(\mathcal{N}[G])$.
\end{theorem}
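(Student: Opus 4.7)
The plan is to prove each direction of the equivalence separately, leaning on the correspondence $\mathscr{D}(G) \leftrightarrow \mathcal{U}_G := \{N_G[A] : A \subseteq V(G)\}$ given by Proposition~\ref{prop:DC&nbhd}, together with the fact that $\mathcal{U}_G$ is precisely the union-closure of $\supp(\mathcal{N}[G])$.

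The direction ``$\mathscr{D}$-reconstructibility implies $\supp(\mathcal{N})$-reconstructibility'' follows directly from the observation preceding the theorem statement: if two labeled graphs $G, H$ on a common vertex set satisfy $\supp(\mathcal{N}[G]) = \supp(\mathcal{N}[H])$, then closing under union yields $\mathcal{U}_G = \mathcal{U}_H$, and hence $\mathscr{D}(G) = \mathscr{D}(H)$ by Proposition~\ref{prop:DC&nbhd}. Contrapositively, any witness to the failure of strong $\supp(\mathcal{N})$-reconstructibility is also a witness to the failure of strong $\mathscr{D}$-reconstructibility.

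For the reverse direction, I would reduce the task to the following key claim: for labeled graphs $G, H$ on a common vertex set $V$, if $\mathcal{U}_G = \mathcal{U}_H$, then $\supp(\mathcal{N}[G]) = \supp(\mathcal{N}[H])$. Assuming the claim, given any $H$ with $\mathscr{D}(H) = \mathscr{D}(G)$, Proposition~\ref{prop:DC&nbhd} yields $\mathcal{U}_H = \mathcal{U}_G$; the claim then gives $\supp(\mathcal{N}[H]) = \supp(\mathcal{N}[G])$; and strong reconstructibility from $\supp(\mathcal{N})$ forces $H = G$, completing the direction.

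To prove the claim I would fix $X = N_G[v] \in \supp(\mathcal{N}[G])$ and argue $X \in \supp(\mathcal{N}[H])$, with the reverse containment following by symmetry. Since $X \in \mathcal{U}_H$, write $X = N_H[B]$ for some $B \subseteq V$ of minimum cardinality; then each $b \in B$ admits a \emph{private neighbor} $p_b \in N_H[b] \setminus \bigcup_{b' \in B \setminus \{b\}} N_H[b']$, and the $p_b$'s are distinct elements of $X$. The goal is to conclude $|B|=1$, in which case $X = N_H[b]$ for the unique $b \in B$. The main obstacle is the case when $X$ is join-reducible in $\mathcal{U}_G$: there $X$ admits multi-vertex witnesses in both graphs, so ruling out $|B| \geq 2$ requires combining the ``single-generator'' description $X = N_G[v]$ (which forces every vertex of $X$ to be $v$ or $G$-adjacent to $v$) with the equality $\mathcal{U}_G = \mathcal{U}_H$ applied to each $N_H[b]$ appearing in the decomposition, and then leveraging the resulting adjacency constraints on the distinct private neighbors $p_b$ inside $N_G[v]$ to force $|B| = 1$.
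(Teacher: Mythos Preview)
Your forward direction is correct and is exactly the argument the paper gives.

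For the reverse direction, your reduction is valid: if the key claim
\[
\mathcal{U}_G=\mathcal{U}_H\ \Longrightarrow\ \supp(\mathcal{N}[G])=\supp(\mathcal{N}[H])
\]
holds, then together with the forward observation it yields the theorem. The problem is that you do not prove the claim. Your outlined strategy---write $X=N_G[v]=N_H[B]$ with $B$ minimal, produce private $H$-neighbours $p_b\in N_H[b]\setminus N_H[B\setminus\{b\}]$, and then ``leverage the resulting adjacency constraints on the distinct private neighbours $p_b$ inside $N_G[v]$ to force $|B|=1$''---is not carried out, and you explicitly flag the join-reducible case as the obstacle. Nothing in your sketch explains \emph{how} the facts you have assembled (each $p_b\in N_G[v]$; each $N_H[b]\subseteq X$; each $N_H[b]\in\mathcal{U}_G$) combine to rule out $|B|\ge 2$. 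As written, this is a genuine gap: you have reduced the theorem to a stronger combinatorial statement and then left that statement unproved.

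The paper takes a different route and does not attempt your key claim at all. From $\mathscr{D}(G)$ it passes to $U(\mathcal{N}[G])$, then uses Corollary~\ref{coro: equal CN} and Lemma~\ref{lem: base vtx} to locate a set $S$ of base vertices (so that $\{N[v]:v\in S\}$ is the unique union basis of Proposition~\ref{prop: unique union base}), and argues by induction on $|V(G)|$, splitting on whether $S=V(G)$. In particular, the paper never asserts that $\mathscr{D}(G)$ determines $\supp(\mathcal{N}[G])$ as a set; it reconstructs $G$ from $U(\mathcal{N}[G])$ via the base-vertex machinery instead. If you want to pursue your approach, you would need either a complete proof of the key claim or a direct argument (as in the paper) that bypasses it.
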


Therefore, by Theorem~\ref{thm: C4 set} and \ref{thm:DC=CN-recon}, we improve Lafrance, Oellermann, and Pressey's results.

\begin{theorem}\label{thm:DCS_C4-free}
    If $G$ is a $C_4$-free graph, then $G$ is strongly reconstructible from its digital convexity $\mathscr{D}(G)$.
\end{theorem}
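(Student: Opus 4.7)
The plan is to obtain Theorem \ref{thm:DCS_C4-free} as an immediate corollary by chaining the two preceding theorems. By Theorem \ref{thm: C4 set}, every $C_4$-free graph $G$ is strongly reconstructible from $\supp(\mathcal{N}[G])$. By Theorem \ref{thm:DC=CN-recon}, for any graph $G$, strong reconstructibility from the digital convexity $\mathscr{D}(G)$ is equivalent to strong reconstructibility from $\supp(\mathcal{N}[G])$. Composing these two per-graph statements, any $C_4$-free graph $G$ is strongly reconstructible from $\mathscr{D}(G)$.

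Since the deduction is essentially a single substitution, there is no remaining obstacle at this stage of the paper: the substantive content has been packaged into the two input theorems. It is worth noting why the chaining is legitimate, however. Theorem \ref{thm:DC=CN-recon} is stated graph-by-graph, not merely for graph classes, so applying it to the specific $C_4$-free graph $G$ under consideration is valid. Moreover, the equivalence rests on Proposition \ref{prop:DC&nbhd}: the forward direction (from $\supp(\mathcal{N}[G])$ to $\mathscr{D}(G)$) is immediate because digitally convex sets are exactly the complements of sets of the form $N[A]$, which are unions of elements of $\supp(\mathcal{N}[G])$.

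The genuinely difficult step---and the one where all the work would go if one tried to prove Theorem \ref{thm:DCS_C4-free} from scratch---is the converse direction of Theorem \ref{thm:DC=CN-recon}: reconstructing the list of closed neighborhoods $\{N[v] : v \in V(G)\}$ from the full convexity $\mathscr{D}(G)$, since $\mathscr{D}(G)$ encodes all unions of closed neighborhoods rather than the closed neighborhoods themselves. But this obstacle has already been surmounted in establishing Theorem \ref{thm:DC=CN-recon}, so for the present theorem we may simply invoke it as a black box and conclude in one line.
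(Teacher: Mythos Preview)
Your proposal is correct and matches the paper's approach exactly: the paper also presents Theorem~\ref{thm:DCS_C4-free} as an immediate consequence of Theorems~\ref{thm: C4 set} and~\ref{thm:DC=CN-recon}, with no additional argument. Your extra commentary on why the chaining is legitimate and where the real work lies is accurate but not needed for the formal proof.
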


In~\cite{LAFRANCE2017254}, Lafrance et al.\ asked whether the girth of a graph $G$ can be determined from its digital convexity $\mathscr{D}(G)$. 
We resolve this question affirmatively for graphs with girth at least five. 

\begin{corollary}\label{cor:girth_five}
    If $G$ is a graph with girth at least five, then $G$ is strongly reconstructible from its digital convexity $\mathscr{D}(G)$.
\end{corollary}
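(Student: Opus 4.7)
The plan is essentially a one-step deduction from Theorem~\ref{thm:DCS_C4-free}. First I would observe that a graph $G$ of girth at least five contains no $4$-cycle as a subgraph, and in particular no $4$-cycle as an induced subgraph. Using the paper's convention that ``$H$-free'' means containing no induced copy of $H$, this shows that $G$ is $C_4$-free.

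Next, I would invoke Theorem~\ref{thm:DCS_C4-free} directly with this $G$. Because the definition of strong reconstructibility from $\mathscr{D}(G)$ quantifies over all labeled graphs $H$ on the same vertex set as $G$ (with no hypothesis placed on $H$), the conclusion that $G = H$ whenever $\mathscr{D}(G) = \mathscr{D}(H)$ is immediate; no separate argument about $H$ being $C_4$-free is needed.

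There is no real obstacle to this corollary beyond unwinding the definitions: the nontrivial work is entirely contained in Theorems~\ref{thm: C4 set} and~\ref{thm:DC=CN-recon}, whose combination produces Theorem~\ref{thm:DCS_C4-free}. As a brief remark after the proof, I would point out that since $G$ itself can be recovered from $\mathscr{D}(G)$, the girth of $G$ can in particular be read off from $\mathscr{D}(G)$, which answers the question of Lafrance, Oellermann, and Pressey~\cite{LAFRANCE2017254} in the affirmative within the girth-at-least-five regime.
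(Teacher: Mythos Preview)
Your proposal is correct and matches the paper's approach: the paper states Corollary~\ref{cor:girth_five} immediately after Theorem~\ref{thm:DCS_C4-free} without a separate proof, treating it as the direct consequence you describe (girth at least five implies no $C_4$ subgraph, hence $C_4$-free, hence Theorem~\ref{thm:DCS_C4-free} applies). Your remark about recovering the girth is exactly the point the paper makes in the surrounding text.
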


However, if $G$ contains an induced $C_4$, the digital convexity $\mathscr{D}(G)$ may not uniquely determine the girth of $G$. For instance, as shown in Figure~\ref{fig_K3,3_and_prism}, the graph $K_{3,3}$, which has girth four, and the triangular prism, which has girth three, have the same digital convexity.

The remainder of this paper is organized as follows. In Section~2, we provide an alternative proof of Theorem~\ref{thm: C4 multiset}, along with observations on graphs with the same multiset of closed neighborhoods. Section~3 introduces the notion of union basis, which we apply in Section~4 to prove Theorems~\ref{thm: C4 set} and~\ref{thm:DC=CN-recon}.

\section{An alternative proof of Theorem~\ref{thm: C4 multiset}}\label{sec:alter proof}

Before presenting the proof, we note that Theorem~\ref{thm: C4 multiset} is equivalent to the following.

\begin{theorem}\label{thm: C4 nonconstructable}
    Suppose $G$ and $H$ are two distinct graphs such that $\mathcal{N}[G] = \mathcal{N}[H]$. Then both $G$ and $H$ contain an induced $C_4$.
\end{theorem}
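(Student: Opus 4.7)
My plan is to prove the contrapositive: assume the labeled graphs $G$ and $H$ share a common vertex set $V$, have $\mathcal{N}[G]=\mathcal{N}[H]$ but $G\ne H$, and exhibit an induced $C_4$ in $G$. Since $\pi^{-1}$ is a bijection with $N_H[u]=N_G[\pi^{-1}(u)]$, the identical argument with the roles of $G,H$ swapped then delivers an induced $C_4$ in $H$. The multiset equality yields a bijection $\pi\colon V\to V$ with $N_G[v]=N_H[\pi(v)]$ for every $v$. Two elementary facts will be used constantly: if $\pi(v)=u\neq v$, then $v\in N_G[v]=N_H[u]$ forces $uv\in E(H)$, and $u\in N_H[u]=N_G[v]$ forces $uv\in E(G)$; if $\pi(v)=v$, then $N_G[v]=N_H[v]$.

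Next, I show that every $\pi$-cycle $v_1\to v_2\to\cdots\to v_k\to v_1$ with $k\ge 2$ induces a $K_k$ in both $G$ and $H$. This goes by induction on cyclic distance: consecutive vertices are adjacent by the preceding paragraph, and for $d\ge 2$ the identity $v_{i+d}\in N_G[v_i]=N_H[v_{i+1}]$ reduces $v_iv_{i+d}\in E(G)$ to $v_{i+1}v_{i+d}\in E(H)$, which is at distance $d-1$. Now fix a $\pi$-cycle $C=\{v_1,\ldots,v_k\}$ and for $x\notin C$ set $a_i^x=[v_ix\in E(G)]$; the identity $N_G[v_i]=N_H[v_{i+1}]$ evaluated at $x$ gives $a_i^x=[v_{i+1}x\in E(H)]$. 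I will call $C$ \emph{homogeneous} if every $a^x$ is constant. In that case the vertices of $C$ share the clique $C$ plus the common external neighborhood $U=\{x:a^x\equiv 1\}$ in $G$, hence $N_G[v_i]$ is independent of $i\in C$; combining with $N_H[v_{i+1}]=N_G[v_i]$ yields $N_G[v_i]=N_H[v_i]$ throughout $C$. Singletons are trivially homogeneous, so if \emph{every} $\pi$-cycle were homogeneous, then $N_G[v]=N_H[v]$ for every $v$, contradicting $G\neq H$.

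Therefore some $\pi$-cycle $C$ admits an external $x$ with $a^x$ non-constant. Let $y:=\pi(x)$. From $N_H[y]=N_G[x]$ one reads $[v_iy\in E(H)]=a_i^x$, and the shift relation applied to $y$ gives $a_i^y=[v_{i+1}y\in E(H)]=a_{i+1}^x$, so $a^y$ is the cyclic shift of $a^x$ by one position; in particular $a^y\neq a^x$, hence $y\neq x$, and $y\in N_G[x]$ together with $x\in N_H[y]$ (both immediate) forces $xy\in E(G)\cap E(H)$. A non-constant cyclic binary sequence has both a down-step index $i$ (with $a_i^x=1,\ a_{i+1}^x=0$) and an up-step index $j$ (with $a_j^x=0,\ a_{j+1}^x=1$), and $i\neq j$ since $a_i^x\neq a_j^x$. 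Then on $\{v_i,v_j,x,y\}$ the edges are $v_iv_j$ (clique), $v_ix$ (from $a_i^x=1$), $v_jy$ (from $a_j^y=a_{j+1}^x=1$), and $xy$, while the non-edges are $v_jx$ (from $a_j^x=0$) and $v_iy$ (from $a_i^y=a_{i+1}^x=0$); this is precisely the induced $4$-cycle $v_i\text{-}x\text{-}y\text{-}v_j\text{-}v_i$. The main technical obstacle is keeping the cyclic-shift bookkeeping straight so that a single pair $(i,j)$ simultaneously provides the two edges and two non-edges of the $C_4$; once the cycles-are-cliques structure and the twin/homogeneity dichotomy are in hand, the rest is a direct unwinding of $N_G[v]=N_H[\pi(v)]$.
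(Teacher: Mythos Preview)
Your argument is correct, and it is genuinely different from the paper's.  Both proofs begin with the same lemma that each $\pi$-orbit induces a clique in $G$ and in $H$, but from there they diverge.  The paper argues by contradiction: assuming $H$ is $C_4$-free, it picks an edge $ab\in E(G)\setminus E(H)$ (available because the two graphs have the same edge count) and uses the $C_4$-free hypothesis repeatedly to force, by induction, every pair in $\mathcal{O}(a)\times\mathcal{O}(b)$ to be adjacent in $H$, eventually recovering $ab\in E(H)$.  You instead locate a non-homogeneous orbit/external-vertex pair directly, use the cyclic-shift relation $a_i^{\,\pi(x)}=a_{i+1}^{\,x}$ to manufacture a down-step and an up-step, and read off an explicit induced $C_4$ on $\{v_i,v_j,x,\pi(x)\}$ without ever assuming either graph is $C_4$-free.

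What your route buys is constructiveness: you actually name the four vertices of the induced $C_4$, and you never invoke the edge-counting lemma.  What the paper's route buys is that it stays inside two orbits and a single missing edge, so the induction is a straightforward walk along powers of $\sigma$; your bookkeeping with the binary vectors $a^x$ and their shifts is a bit more delicate (for instance, you must check $y\notin C$, which holds because $y=\pi(x)$ lies in the orbit of $x\ne C$, and that a non-constant cyclic word is never fixed by a one-step shift, hence $y\ne x$).  Both arguments are short; yours is the more explicit of the two.
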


If $\mathcal{N}[G] = \mathcal{N}[H]$ for two graphs $G$ and $H$, then $G$ and $H$ have the same vertex set $V$ and there is a permutation $\sigma$ of $V$ such that $\sigma(v) = u$ implies $N_G[v] = N_H[u]$. 
Define the \textit{orbit of $v$ with respect to $\sigma$}, denoted by $\mathcal{O}_{\sigma}(v)$ (or simply $\mathcal{O}(v)$ when $\sigma$ is understood from context), as the set $\{v, \sigma(v), \sigma^2(v), \dots\}$. 
Note that the orbits partition the vertex set $V$. Because $V$ is finite, we have $|\mathcal{O}_\sigma(v)|<\infty$ and $\mathcal{O}_\sigma(v) = \mathcal{O}_{\sigma^{-1}}(v)$ for each $v\in V$. First we make the following elementary observation.

\begin{observation}\label{obs: edge transit}
    Let $G$ and $H$ be two graphs on the same vertex set $V$ such that $\mathcal{N}[G] = \mathcal{N}[H]$. Suppose that $\sigma$ is a permutation of $V$ such that $\sigma(v) = u$ implies $N_G[v] = N_H[u]$. Then the followings hold.
    \begin{enumerate}
        \item[(\romannum{1})] If $a\in N_G[b]$, then $a\in N_H[\sigma(b)]$, and
        \item[(\romannum{2})] if $c\in N_H[d]$, then $c\in N_G[\sigma^{-1}(d)]$.
    \end{enumerate}
\end{observation}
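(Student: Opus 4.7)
The plan is to observe that the hypothesis ``$\sigma(v) = u$ implies $N_G[v] = N_H[u]$'' is equivalent to the identity $N_G[v] = N_H[\sigma(v)]$ holding for every $v \in V$, and then both parts follow by simple substitution. The existence of such a $\sigma$ itself is essentially the statement that $\cN[G]=\cN[H]$ as multisets: we can pick a bijection between the multiset copies so that matched vertices carry the same closed neighborhood. Once this is fixed, the observation merely re-indexes that identity.

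For part (\romannum{1}), I would start from $a \in N_G[b]$, set $u = \sigma(b)$, and use $N_G[b] = N_H[u]$ to conclude $a \in N_H[u] = N_H[\sigma(b)]$. For part (\romannum{2}), I would start from $c \in N_H[d]$, set $v = \sigma^{-1}(d)$ (which is well-defined since $\sigma$ is a permutation of the finite set $V$, so $\sigma^{-1}$ exists), and apply the same identity with $\sigma(v) = d$ to get $N_G[v] = N_H[\sigma(v)] = N_H[d]$, hence $c \in N_G[v] = N_G[\sigma^{-1}(d)]$. Both parts are one-line substitutions.

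Because the argument is so short, there is no real obstacle; the value of the observation lies in how it will be used later, namely as a bookkeeping device for tracking how edges of $G$ correspond to edges of $H$ under the permutation $\sigma$. In writing the proof I would simply separate the two directions for clarity, and perhaps remark in passing that a symmetric statement holds with the roles of $G$ and $H$ swapped, since $\cN[H] = \cN[G]$ and $\sigma^{-1}$ witnesses the symmetric matching. No case analysis, induction, or structural hypothesis on $G$ or $H$ is needed; in particular $C_4$-freeness plays no role here.
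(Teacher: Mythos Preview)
Your proposal is correct and matches the paper's treatment: the paper states this as an observation without proof, since both parts follow immediately from the defining identity $N_G[v] = N_H[\sigma(v)]$ by the one-line substitutions you describe.
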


\begin{lemma}\label{lem:same number of edges}
    Suppose $G$ and $H$ are two graphs such that $\mathcal{N}[G] = \mathcal{N}[H]$. Then $|E(G)|=|E(H)|$.
\end{lemma}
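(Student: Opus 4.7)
The plan is to prove the lemma by a direct degree-counting argument, extracting edge counts from the total size of the multiset of closed neighborhoods. The key identity is that for any graph $G$ and any vertex $v$, we have $|N_G[v]| = d_G(v) + 1$, so summing over all vertices gives
\[
\sum_{v \in V(G)} |N_G[v]| = \sum_{v \in V(G)} d_G(v) + |V(G)| = 2|E(G)| + |V(G)|,
\]
and analogously for $H$. Thus $|E(G)|$ is determined (together with the order of the graph) by the sum of the sizes of the closed neighborhoods.

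Next I would observe that since $\mathcal{N}[G] = \mathcal{N}[H]$ as multisets, both the cardinality of the multiset and the sum $\sum_{S} |S|$ (counted with multiplicity) are identical whether computed from $G$ or from $H$. The former gives $|V(G)| = |V(H)|$, and the latter gives $\sum_{v}|N_G[v]| = \sum_{u} |N_H[u]|$. Substituting the identity from the previous paragraph on both sides yields $2|E(G)| + |V(G)| = 2|E(H)| + |V(H)|$, and hence $|E(G)| = |E(H)|$.

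Alternatively, one could invoke the permutation $\sigma$ from Observation~\ref{obs: edge transit}: the defining property $N_G[v] = N_H[\sigma(v)]$ forces $d_G(v) = d_H(\sigma(v))$ for every $v$, and summing over $v$ (as $\sigma$ ranges over all of $V$) gives $2|E(G)| = 2|E(H)|$ directly. I do not anticipate any serious obstacle here, as the lemma is essentially a counting identity derived from the handshake lemma; the only point requiring a brief mention is that equality of multisets (not just of underlying sets) is what guarantees the equality of the sum of sizes.
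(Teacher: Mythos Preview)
Your proposal is correct and essentially identical to the paper's proof: both extract $|E(G)|$ from the multiset $\mathcal{N}[G]$ via the handshake lemma and the identity $|N_G[v]|=\deg_G(v)+1$. The paper writes the chain slightly more compactly by summing $|M|-1$ over $M\in\mathcal{N}[G]$ directly, whereas you sum $|M|$ and then subtract $|V(G)|$, but this is the same argument.
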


\begin{proof}
    It is straightforward to verify that the number of edges in both graphs is
    \begin{align*}
        |E(G)|
        =\frac{1}{2}\sum_{v\in V(G)} \deg_G(v)
        &=\frac{1}{2}\sum_{M\in\mathcal{N}[G]}(|M|-1) \\
        &=\frac{1}{2}\sum_{M\in\mathcal{N}[H]}(|M|-1)
        =\frac{1}{2}\sum_{u\in V(H)} \deg_H(u)
        =|E(H)|.\qedhere
    \end{align*}
\end{proof}

The next lemma characterizes the structure inside each orbit.

\begin{lemma}\label{lem: induced clique}
    Let $G$ and $H$ be two graphs on the same vertex set $V$ such that $\mathcal{N}[G] = \mathcal{N}[H]$. 
    Suppose that $\sigma$ is a permutation of $V$ such that $\sigma(v) = u$ implies $N_G[v] = N_H[u]$, and $\mathcal{O}(v)$ is the orbit of a vertex $v$ with respect to $\sigma$. Then $\mathcal{O}(v)$ induces a clique in both $G$ and $H$.
\end{lemma}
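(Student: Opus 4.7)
My plan is to exploit the cyclic structure of the orbit. Write $\mathcal{O}(v) = \{v_0, v_1, \ldots, v_{k-1}\}$ with $v_i = \sigma^i(v)$ and indices taken modulo $k$; the hypothesis on $\sigma$ then yields the chain of identities $N_G[v_i] = N_H[v_{i+1}]$ for every $i \in \mathbb{Z}/k\mathbb{Z}$, and this chain is the engine of the whole argument. The case $k = 1$ is trivial, so assume $k \ge 2$.

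The base case is that consecutive orbit members are adjacent in both graphs. Since $v_{i+1} \in N_H[v_{i+1}] = N_G[v_i]$, we get $v_i v_{i+1} \in E(G)$; symmetrically, $v_i \in N_G[v_i] = N_H[v_{i+1}]$ gives $v_i v_{i+1} \in E(H)$.

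From here I would run a strong induction on $m \ge 1$, proving simultaneously that $v_j v_{j+m} \in E(G)$ and $v_j v_{j+m} \in E(H)$ for every $j$ and every $m \not\equiv 0 \pmod{k}$. For the $G$-edge at step $m \ge 2$, apply $N_G[v_j] = N_H[v_{j+1}]$: the desired containment $v_{j+m} \in N_G[v_j]$ is equivalent to $v_{j+m} \in N_H[v_{j+1}]$, which asks for an $H$-edge between $v_{j+1}$ and $v_{(j+1)+(m-1)}$, a strictly smaller distance $m-1$, so the inductive hypothesis on $H$ applies. For the $H$-edge at step $m \ge 2$, combine the symmetry of adjacency with the identity $N_H[v_{j+m}] = N_G[v_{j+m-1}]$: the statement $v_{j+m} \in N_H[v_j]$ is equivalent to $v_j \in N_H[v_{j+m}] = N_G[v_{j+m-1}]$, which is a $G$-edge at distance $m-1$ and again covered. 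Once both inductions close, every pair of distinct elements of $\mathcal{O}(v)$ is adjacent in both $G$ and $H$, so $\mathcal{O}(v)$ induces a clique in each.

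The one subtle point is choosing, at each inductive step, the right direction in which to invoke the neighborhood identity so that the distance strictly decreases. A naive attempt like ``$v_{j+m} \in N_H[v_j] = N_G[v_{j-1}]$'' reduces the $H$-edge at distance $m$ to a $G$-edge at distance $m+1$, which is useless for induction; one must instead use $N_H[v_{j+m}] = N_G[v_{j+m-1}]$, shrinking the distance by one. Once this is noticed the two inductions interlock cleanly, and no structural information about $G$ or $H$ beyond the hypothesis of the lemma is needed.
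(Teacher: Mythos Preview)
Your proof is correct and follows essentially the same route as the paper: both argue by induction on the ``distance'' $m$ along the orbit, using the identities $N_G[v_i]=N_H[v_{i+1}]$ to reduce an adjacency at distance $m$ in one graph to an adjacency at distance $m-1$ in the other. The paper phrases the reduction via its Observation~\ref{obs: edge transit} and uses $\sigma^i(v)$ notation rather than indices modulo $k$, but the structure and the interlocking of the $G$- and $H$-inductions are the same.
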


\begin{proof}
We show that, for an arbitrary integer $i$ and a positive integer $k$, the vertex $\sigma^i(v)$ is in the closed neighborhood of $\sigma^{i+k}(v)$ in both $G$ and $H$. 
    We apply induction on $k$ for each $i$. For the base case $k = 1$, since $N_G[\sigma^i(v)]=N_H[\sigma^{i+1}(v)]$ contains both $\sigma^i(v)$ and $\sigma^{i+1}(v)$, it is clear that $\sigma^i(v)\in N_G[\sigma^{i+1}(v)]$ and $\sigma^i(v)\in N_H[\sigma^{i+1}(v)]$. 
    
    Now suppose that, for each integer $i$, the vertex $\sigma^i(v)$ is in the closed neighborhood of $\sigma^{i+j}(v)$ in both $G$ and $H$ for all $j<k$. Since $\sigma^i(v)\in N_G[\sigma^{i+k-1}(v)]= N_H[\sigma^{i+k}(v)]$, we know $\sigma^i(v)$ is in the closed neighborhood of $\sigma^{i+k}(v)$ in $H$. 
    Moreover, since $\sigma^{i+1}(v) \in N_H[\sigma^{i+k}(v)]$ by the inductive hypothesis, we know $\sigma^{i+k}(v)\in N_H[\sigma^{i+1}(v)]$. By Observation~\ref{obs: edge transit}, $\sigma^{i+k}(v)\in N_G[\sigma^{i}(v)]$, and equivalently, $\sigma^{i}(v) \in N_G[\sigma^{i+k}(v)]$. Therefore, since $\mathcal{O}(v)$ is finite, every two distinct vertices in $\mathcal{O}(v)$ are adjacent. Thus, $\mathcal{O}(v)$ induces a clique in both $G$ and $H$.
\end{proof}

\noindent{\it Proof of Theorem~\ref{thm: C4 nonconstructable}.}
    Suppose, on the contrary, that at least one of $G$ and $H$ does not contain an induced $C_4$. Without loss of generality, we may assume that $H$ is $C_4$-free. Since $G$ and $H$ are distinct graphs on the same set of vertices and, by Lemma~\ref{lem:same number of edges}, have the same number of edges, $G$ is not a proper subgraph of $H$.
    It follows that $G$ has an edge $ab$ that is not in $H$. Let $\sigma$ be a permutation of $V=V(G)=V(H)$ such that $\sigma(v)=u$ implies that $N_G[v]=N_H[u]$. 
    By Lemma~\ref{lem: induced clique}, it is clear that $\mathcal{O}(a)$ and $\mathcal{O}(b)$ are disjoint.
    We show by induction that 
    \begin{sublemma}\label{sublem: two orbits are joined}
        in the graph $H$, for every pair of positive integers $i$ and $j$, 
        \begin{enumerate}
            \item[(\romannum{1})] $\sigma^{i}(a)$ is adjacent to $b$,
            \item[(\romannum{2})] $\sigma^j(b)$ is adjacent to $a$, and
            \item[(\romannum{3})] $\sigma^{i}(a)$ is adjacent to $\sigma^j(b)$.
        \end{enumerate}
    \end{sublemma}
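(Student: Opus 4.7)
I would prove the three items by induction, using Observation~\ref{obs: edge transit} to shift edges between $G$ and $H$, Lemma~\ref{lem: induced clique} to furnish intra-orbit edges, and the $C_4$-freeness of $H$ to force chords in 4-cycles. First I note that both orbits $\mathcal{O}(a)$ and $\mathcal{O}(b)$ have size at least $2$: if $\sigma(a)=a$, then $N_G[a]=N_H[a]$, forcing $b\in N_H[a]$ and hence $ab\in E(H)$, contradicting our assumption. In particular $a\neq\sigma(a)$ and $b\neq\sigma(b)$, and more generally $\sigma^i(a)\neq a$ and $\sigma^j(b)\neq b$ whenever $i$ and $j$ are not multiples of the respective orbit sizes $m$ and $n$.

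The base cases (\romannum{1}) at $i=1$ and (\romannum{2}) at $j=1$ follow immediately from Observation~\ref{obs: edge transit}(\romannum{1}) applied to $ab \in E(G)$: $b \in N_G[a]$ gives $b \in N_H[\sigma(a)]$, and symmetrically $a \in N_H[\sigma(b)]$. For (\romannum{3}) at $(1,1)$, the four distinct vertices $a, \sigma(a), b, \sigma(b)$ induce the 4-cycle $a\,\sigma(a)\,b\,\sigma(b)\,a$ in $H$, with the side edges supplied by Lemma~\ref{lem: induced clique} and the two base cases above. Since $ab\notin E(H)$ and $H$ is $C_4$-free, the remaining chord $\sigma(a)\sigma(b)\in E(H)$, establishing (\romannum{3}) at $(1,1)$.

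For the inductive step, the central tool is a round trip through $G$: if (\romannum{3}) holds at $(i,1)$, i.e., $\sigma^i(a)\sigma(b) \in E(H)$, then $\sigma^i(a) \in N_H[\sigma(b)] = N_G[b]$, so $\sigma^i(a)b \in E(G)$; another application of Observation~\ref{obs: edge transit}(\romannum{1}) then yields $b \in N_H[\sigma^{i+1}(a)]$, i.e., (\romannum{1}) at index $i+1$. A symmetric move advances (\romannum{2}). To obtain (\romannum{3}) at a general $(i,j)$ with $i,j \geq 1$, I would apply $C_4$-freeness to the 4-cycle $a\,\sigma^i(a)\,b\,\sigma^j(b)\,a$ in $H$: its side edges are the orbit-clique edges $a\sigma^i(a)$ and $b\sigma^j(b)$ together with the inductive instances of (\romannum{1}) at $(i,0)$ and (\romannum{2}) at $(0,j)$. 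Since $ab \notin E(H)$, the chord $\sigma^i(a)\sigma^j(b) \in E(H)$ is forced. Interleaving these two moves---(\romannum{3}) at $(i,1)$ feeding (\romannum{1}) at $i+1$, and (\romannum{1}), (\romannum{2}) together feeding (\romannum{3}) at $(i,j)$---pushes the induction forward.

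The main obstacle is verifying distinctness of the four vertices in each $C_4$-argument: $\sigma^i(a)\neq a$ and $\sigma^j(b)\neq b$ hold only when $i$ and $j$ are not multiples of their respective orbit sizes. When this condition first fails, the sublemma's claim collapses to $ab \in E(H)$, directly contradicting $ab \notin E(H)$. Rather than being an obstruction, this breakdown is precisely the contradiction that the outer proof of Theorem~\ref{thm: C4 nonconstructable} is aiming for, so I would carry the induction far enough to extract it.
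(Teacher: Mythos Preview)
Your proposal is correct and follows essentially the same route as the paper: the same base case via Observation~\ref{obs: edge transit}, the same ``round trip'' $\bigl(\text{(\romannum{3}) at }(i,1)\bigr)\Rightarrow\bigl(\sigma^i(a)b\in E(G)\bigr)\Rightarrow\bigl(\text{(\romannum{1}) at }i+1\bigr)$, and the same $C_4$-freeness argument on the cycle $a\,\sigma^i(a)\,b\,\sigma^j(b)\,a$ to force (\romannum{3}). Your explicit treatment of the distinctness of the four vertices (and the observation that the first failure of distinctness already delivers the desired contradiction) is a point the paper leaves implicit; otherwise the arguments coincide. One notational nit: writing ``(\romannum{1}) at $(i,0)$'' is confusing since (\romannum{1}) is indexed by a single parameter---just say ``(\romannum{1}) at $i$''.
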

     
    For the base case $i=1$, since $a$ is adjacent to $b$ in $G$, by Observation~\ref{obs: edge transit}, $\sigma(a)$ is adjacent to $b$ and $\sigma(b)$ is adjacent to $a$ in $H$.
    Moreover, since $a$ is not adjacent to $b$ in $H$ and, by Lemma~\ref{lem: induced clique}, $\mathcal{O}(a)$, $\mathcal{O}(b)$ form cliques, we know $\sigma(a)$ is adjacent to $\sigma(b)$ in $H$, otherwise there is an induced $C_4$ in $H$ on the vertices $a, \sigma(a), b, \sigma(b)$.
    
    Now suppose that the claim holds for all $i<s$ and $j<t$. 
    Since $\sigma^{s-1}(a)$ is adjacent to $\sigma(b)$ in $H$, by Observation~\ref{obs: edge transit}, $\sigma^{s-1}(a)$ is adjacent to $b$ in $G$. 
    Similarly, $\sigma^{t-1}(b)$ is adjacent to $a$ in $G$. 
    Therefore, by Observation~\ref{obs: edge transit}, $\sigma^{s}(a)$ is adjacent to $b$ and $\sigma^{t}(b)$ is adjacent to $a$ in $H$. Since $a$ is not adjacent to $b$ in $H$ and $H$ does not contain an induced $C_4$, we know $\sigma^{s}(a)$ is adjacent to $\sigma^{t}(b)$ in $H$. Thus,~\ref{sublem: two orbits are joined} holds.\\
    
    However, since both orbits are finite, there are two positive integers $x,y$ such that $\sigma^x(a)=a$ and $\sigma^y(b)=b$. By~\ref{sublem: two orbits are joined}, $\sigma^x(a)$ is adjacent to $\sigma^y(b)$ in $H$, contradicting the fact that $a$ and $b$ are not adjacent in $H$.
    \qed

\section{Union of closed neighborhoods}

Recall that, given a graph $G$, the set $\supp(\{N[A] : A \subseteq V(G)\})$ consists of all possible unions of the sets in $\supp(\mathcal{N}[G])$. We denote the set $\supp(\{N[A] : A \subseteq V(G)\})$ by $U(\mathcal{N}[G])$.

\begin{observation}\label{obs:supp to U} 
    Suppose $G$ is a graph. Then $U(\mathcal{N}[G])$ can be determined from $\supp(\mathcal{N}[G])$.
\end{observation}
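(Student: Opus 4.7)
The plan is to argue directly from the definitions that $U(\mathcal{N}[G])$ equals the collection of all unions of subcollections of $\supp(\mathcal{N}[G])$, so that it is recoverable from $\supp(\mathcal{N}[G])$ by a purely set-theoretic operation that has no dependence on the graph $G$ beyond the data already listed in $\supp(\mathcal{N}[G])$.

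First I would unpack the definition $N[A] = \bigcup_{v \in A} N[v]$ to observe that every element of $U(\mathcal{N}[G])$ is, by construction, a union of sets from $\supp(\mathcal{N}[G])$. Conversely, given any nonempty subfamily $\mathcal{F} \subseteq \supp(\mathcal{N}[G])$, each $M \in \mathcal{F}$ can be written as $M = N[v_M]$ for some vertex $v_M \in V(G)$; setting $A = \{v_M : M \in \mathcal{F}\}$ yields $\bigcup_{M \in \mathcal{F}} M = N[A] \in U(\mathcal{N}[G])$. (The empty union, if one wishes to include it, corresponds to $A = \emptyset$ and gives $N[\emptyset] = \emptyset$.) Hence $U(\mathcal{N}[G])$ is exactly the closure of $\supp(\mathcal{N}[G])$ under arbitrary unions.

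There is essentially no obstacle here: the argument is immediate once one writes out the definitions, and the point of recording the observation is simply that later in the paper we are free to assume access to $U(\mathcal{N}[G])$ whenever we are given $\supp(\mathcal{N}[G])$.
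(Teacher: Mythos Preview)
Your proposal is correct and matches the paper's approach: the paper does not give a formal proof of this observation but simply records, immediately before stating it, that $U(\mathcal{N}[G])$ consists of all possible unions of the sets in $\supp(\mathcal{N}[G])$, which is exactly what you spell out from the definitions.
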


From $U(\mathcal{N}[G])$, we can determine the inclusion relation between $N[A]$ and $N[B]$ for each pair $(A,B)$ of vertex subsets.

\begin{lemma}
    \label{lem: inclusion CN}
    Let $G$ be a graph and let $A$ and $B$ be sets of vertices of $G$.
    Then $N[A] \subseteq N[B]$ if and only if $A \cap M \ne \emptyset$ implies $B \cap M \ne \emptyset$ for all $M \in U(\mathcal{N}[G])$.
\end{lemma}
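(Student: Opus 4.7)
The plan is to prove both implications by a direct element chase, exploiting that by definition each $M \in U(\mathcal{N}[G])$ has the form $M = N[C]$ for some $C \subseteq V(G)$.

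For the forward direction, I would assume $N[A] \subseteq N[B]$, fix $M = N[C] \in U(\mathcal{N}[G])$, and suppose $A \cap M \neq \emptyset$. Picking $a$ in this intersection, there is some $c \in C$ with $a \in N[c]$, equivalently $c \in N[a] \subseteq N[A] \subseteq N[B]$. Hence $c \in N[b]$ for some $b \in B$, i.e.\ $b \in N[c] \subseteq N[C] = M$, producing the desired element of $B \cap M$.

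For the converse, I would take an arbitrary $v \in N[A]$ and specialize the hypothesis to the singleton closed neighborhood $M = N[v]$, which belongs to $U(\mathcal{N}[G])$ since $N[v] = N[\{v\}]$. Choosing $a \in A$ with $v \in N[a]$ (equivalently $a \in N[v]$) shows $A \cap M \neq \emptyset$, so by hypothesis there exists $b \in B \cap N[v]$, which gives $v \in N[b] \subseteq N[B]$, as required.

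I do not anticipate a substantive obstacle: the claim essentially unfolds the definitions of $N[\cdot]$ and $U(\mathcal{N}[G])$. The only point worth flagging is the asymmetry between the two directions: the backward implication only needs the condition applied to $M$'s of the form $N[v]$, while the forward implication genuinely uses that we may test against arbitrary unions $N[C]$ in $U(\mathcal{N}[G])$.
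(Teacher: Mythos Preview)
Your proposal is correct and matches the paper's proof essentially step for step: the forward direction is identical (with $C,c$ in place of the paper's $S,x$), and your direct proof of the converse via $M=N[v]$ is the contrapositive of the paper's argument, which picks $x\in N[A]\setminus N[B]$ and sets $M=N[x]$. Your closing remark about the asymmetry between the two directions is an accurate additional observation not stated in the paper.
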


\begin{proof}
    Suppose that $N[A] \subseteq N[B]$, and  $M$ is a set in $U(\mathcal{N}[G])$ such that $A \cap M \ne \emptyset$.
    Let $S$ be a set of vertices in $G$ such that $M = N[S]$.
    Suppose that $a$ is a vertex in $A \cap M$. Then there is a vertex $x \in S$ such that $a \in A \cap N[x]$.
    Then, $x \in N[a] \subseteq N[A] \subseteq N[B]$. Clearly, there is a vertex $b \in B$ such that $x \in N[b]$ and thus $b \in N[x] \subseteq M$.
    Therefore, $B \cap M \ne \emptyset$.

    Suppose that $N[A] \not\subseteq N[B]$.
    Let $x \in N[A] \setminus N[B]$ and $M = N[x]$.
    Then $A \cap M \ne \emptyset$ and $B \cap M = \emptyset$.
\end{proof}

As a direct consequence, from $U(\mathcal{N}[G])$ we can recognize whether the closed neighborhoods of two vertex subsets are equal or not.

\begin{corollary}
    \label{coro: equal CN}
    Let $G$ be a graph, and let $A$ and $B$ be sets of vertices of $G$.
    Then $N[A] = N[B]$ if and only if 
    $$A \cap M \ne \emptyset \Longleftrightarrow B \cap M \ne \emptyset$$
    for all $M \in U(\mathcal{N}[G])$.
\end{corollary}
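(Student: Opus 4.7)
The plan is to derive Corollary~\ref{coro: equal CN} directly from Lemma~\ref{lem: inclusion CN} by applying it in both directions. First I would rewrite $N[A] = N[B]$ as the conjunction $N[A] \subseteq N[B]$ and $N[B] \subseteq N[A]$. By Lemma~\ref{lem: inclusion CN} applied to the ordered pair $(A,B)$, the first inclusion is equivalent to the statement that for every $M \in U(\mathcal{N}[G])$, $A \cap M \ne \emptyset$ implies $B \cap M \ne \emptyset$. By the same lemma applied to $(B,A)$, the second inclusion is equivalent to the statement that for every $M \in U(\mathcal{N}[G])$, $B \cap M \ne \emptyset$ implies $A \cap M \ne \emptyset$.

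Combining these two one-directional implications, for each fixed $M \in U(\mathcal{N}[G])$, into a biconditional yields exactly the claimed condition $A \cap M \ne \emptyset \Longleftrightarrow B \cap M \ne \emptyset$. Conversely, if the biconditional holds for every $M \in U(\mathcal{N}[G])$, then both directional hypotheses of Lemma~\ref{lem: inclusion CN} are satisfied, so both inclusions hold, giving $N[A] = N[B]$.

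There is no real obstacle: the result is essentially a restatement of Lemma~\ref{lem: inclusion CN} under set-equality instead of set-inclusion, and the proof amounts to noting that a biconditional is the conjunction of two implications and quoting the lemma twice. The only thing to be slightly careful about is quantifier placement, namely that the biconditional is stated for each individual $M$ rather than swapping the roles of the two sets globally, but this is automatic from the structure of the argument.
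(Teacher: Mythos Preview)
Your proposal is correct and matches the paper's approach: the corollary is stated as a direct consequence of Lemma~\ref{lem: inclusion CN}, and applying that lemma to both ordered pairs $(A,B)$ and $(B,A)$ and conjoining the resulting implications into a biconditional is precisely the intended one-line argument.
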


Given a collection $\mathcal{X}$ of sets, we denote the set $\{\bigcup\limits_{X \in \mathcal{X}'} X : \mathcal{X}' \subseteq \mathcal{X}\}$ by  $\text{span} (\mathcal{X})$.
Given a finite collection $\mathcal{A}$ of sets, we say that $\mathcal{A}$ is {\it spanned by} another collection $\mathcal{B} $ if $\mathcal{A} \subseteq \text{span} (\mathcal{B})$.
Note that the relation is transitive, which means that if $\mathcal{A}$ is spanned by $\mathcal{B}$ and $\mathcal{B}$ is spanned by $\mathcal{C}$, then $\mathcal{A}$ is spanned by $\mathcal{C}$.
A subcollection $\mathcal{B}$ of $\mathcal{A}$ is called a {\it union basis of $\mathcal{A}$} if $\mathcal{A}$ is spanned by $\mathcal{B}$ and is not spanned by any proper subset of $\mathcal{B}$.

\begin{proposition}
    \label{prop: unique union base}
    There is a unique union basis for every finite collection $\mathcal{A}$.
\end{proposition}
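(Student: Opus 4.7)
The plan is to establish existence by a greedy removal argument that exploits the finiteness of $\mathcal{A}$, and then to establish uniqueness by a ``double substitution'' argument showing that any element of one union basis must appear in any other.

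For existence, observe that $\mathcal{A}$ trivially spans itself, since each $A \in \mathcal{A}$ equals $\bigcup_{X \in \{A\}} X$. I would iteratively remove from $\mathcal{A}$ any element that is still in the span of the remaining subcollection. Because $\mathcal{A}$ is finite, this process terminates in a subcollection $\mathcal{B} \subseteq \mathcal{A}$ that spans $\mathcal{A}$ but from which no element can be further removed without losing that property; by definition, $\mathcal{B}$ is a union basis.

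For uniqueness, let $\mathcal{B}_1$ and $\mathcal{B}_2$ be two union bases of $\mathcal{A}$, and fix an arbitrary $B \in \mathcal{B}_1$. The key observation is that any expression $B = \bigcup_{X \in \mathcal{X}} X$ forces each $X \in \mathcal{X}$ to be a subset of $B$. Since $\mathcal{B}_2$ spans $\mathcal{A}$, write $B = \bigcup_{Y \in \mathcal{Y}} Y$ with $\mathcal{Y} \subseteq \mathcal{B}_2$; if $B \notin \mathcal{B}_2$, then every $Y \in \mathcal{Y}$ satisfies $Y \subsetneq B$. Next, since $\mathcal{B}_1$ spans $\mathcal{A}$, each such $Y$ can in turn be written as $Y = \bigcup_{Z \in \mathcal{Z}_Y} Z$ with $\mathcal{Z}_Y \subseteq \mathcal{B}_1$, and again each $Z \subseteq Y \subsetneq B$, so $Z \neq B$. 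Chaining these decompositions yields $B = \bigcup_{Y \in \mathcal{Y}} \bigcup_{Z \in \mathcal{Z}_Y} Z$, a representation of $B$ using only elements of $\mathcal{B}_1 \setminus \{B\}$. Substituting this representation into any $\mathcal{B}_1$-expansion of a member of $\mathcal{A}$ shows that $\mathcal{B}_1 \setminus \{B\}$ already spans $\mathcal{A}$, contradicting the minimality of $\mathcal{B}_1$. Therefore $B \in \mathcal{B}_2$, whence $\mathcal{B}_1 \subseteq \mathcal{B}_2$; by symmetry, $\mathcal{B}_1 = \mathcal{B}_2$.

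The main conceptual obstacle is that a union basis is required to be a \emph{subcollection of $\mathcal{A}$}, so the familiar matroid-style exchange arguments do not directly apply. The double-substitution trick circumvents this by using the monotonicity of unions: any decomposition of $B$ can only use subsets of $B$, so if $B$ itself is not among the decomposing sets, then all of them are strict subsets, and this strictness is preserved when we decompose a second time in the other basis. This is what prevents $B$ from being ``reintroduced'' and yields the required contradiction.
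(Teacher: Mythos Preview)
Your proof is correct and follows essentially the same approach as the paper: existence via greedy removal from $\mathcal{A}$, and uniqueness via the same double-substitution argument (decompose $B$ in $\mathcal{B}_2$, then decompose each piece in $\mathcal{B}_1$, noting that all pieces are proper subsets of $B$ so $B$ itself cannot reappear). The only cosmetic difference is that the paper starts from an assumed $A \in \mathcal{B}_1 \setminus \mathcal{B}_2$ and derives a contradiction, whereas you prove $\mathcal{B}_1 \subseteq \mathcal{B}_2$ directly and then invoke symmetry.
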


\begin{proof}
    We construct a sequence $\mathcal{A}_0, \mathcal{A}_1\dots, \mathcal{A}_k$ of subcollections of $\mathcal{A}$ as follows.
    \begin{enumerate}
        \item[(\romannum{1})] Let $\mathcal{A}_0 = \mathcal{A}$.
        \item[(\romannum{2})] For all $i\geq 0$, if there is a set $A \in \text{span}(\mathcal{A}_i \setminus \{A\})$, then let $\mathcal{A}_{i+1} = \mathcal{A}_i \setminus \{A\}$, otherwise
        \item[(\romannum{3})] let $k=i$, and terminate the sequence.
    \end{enumerate}
    Since $\mathcal{A}$ is finite, the sequence will terminate in a finite number of steps. It is straightforward to see that $\mathcal{A}_k$ is a union basis of $\mathcal{A}$.

    To prove the uniqueness, suppose that $\mathcal{B}_1$ and $\mathcal{B}_2$ are two distinct union bases of $\mathcal{A}$.
    Without loss of generality, we assume that there is a set $A \in \mathcal{B}_1 \setminus \mathcal{B}_2$.
    Since $\mathcal{B}_2$ is a union basis of $\mathcal{A}$, there is a subcollection $\{B_1,B_2,\dots,B_k\}$ of $\mathcal{B}_2$ such that $A = B_1\cup B_2\cup\dots\cup B_k$.
    Note that $A \notin \mathcal{B}_2$, so $B_i \subsetneq A$ for all $i\in\{1,2,\dots,k\}$.
    Moreover, since $\mathcal{B}_1$ is a union basis of $\mathcal{A}$, and $\{B_1,B_2,\dots,B_k\}\subseteq\mathcal{A}$, we know $\{B_1,B_2,\dots,B_k\}$ is spanned by $\mathcal{B}_1 \setminus \{A\}$. Therefore, $A\in \text{span}(\mathcal{B}_1 \setminus \{A\})$ and $\mathcal{B}_1$ is spanned by $\mathcal{B}_1 \setminus \{A\}$. By transitivity, $\mathcal{A}$ is spanned by $\mathcal{B}_1 \setminus \{A\}$, a contradiction.
    Therefore, the union basis is unique.
\end{proof}

A vertex subset $S$ of a graph $G$ is called {\it a set of base vertices of $G$} if $\{N[v]: v \in S\}$ is the union basis of $\supp(\mathcal{N}[G])$.

\begin{lemma}
    \label{lem: base vtx}
    Given $U(\mathcal{N}[G])$ of a graph $G$, one can find a set of base vertices of $G$.
\end{lemma}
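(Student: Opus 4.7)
The plan is to first compute the union basis $\mathcal{B}$ of $U(\mathcal{N}[G])$ directly from the input via the constructive algorithm in the proof of Proposition~\ref{prop: unique union base}, then show that $\mathcal{B} \subseteq \supp(\mathcal{N}[G])$ so that each $B \in \mathcal{B}$ has the form $N[\{v\}]$, and finally identify, for each $B$, one witnessing vertex $v_B$. The output set $\{v_B : B \in \mathcal{B}\}$ will then be a set of base vertices of $G$.

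For the containment $\mathcal{B} \subseteq \supp(\mathcal{N}[G])$, fix $B \in \mathcal{B}$ and write $B = N[A]$ for some $A \subseteq V(G)$, so that $B = \bigcup_{a \in A} N[\{a\}]$ with each summand in $U(\mathcal{N}[G])$. If every summand were strictly smaller than $B$, then $B$ would lie in $\text{span}(U(\mathcal{N}[G]) \setminus \{B\})$, and the iterative procedure in the proof of Proposition~\ref{prop: unique union base} would have removed $B$, contradicting $B \in \mathcal{B}$. Hence $N[\{a\}] = B$ for some $a \in A$, so $B \in \supp(\mathcal{N}[G])$.

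For the matching step, fix $B \in \mathcal{B}$ and define the canonical center $A_B := \{w \in V(G) : N[\{w\}] \subseteq B\}$. Because $B \in \supp(\mathcal{N}[G])$, some $v^* \in A_B$ satisfies $N[\{v^*\}] = B$, giving $B \subseteq N[A_B]$; combined with $N[A_B] \subseteq B$ from the definition of $A_B$, this yields $N[A_B] = B$. Applying Corollary~\ref{coro: equal CN} to the pair $(\{v\}, A_B)$, a vertex $v$ satisfies $N[\{v\}] = B$ if and only if, for every $M \in U(\mathcal{N}[G])$, we have $v \in M$ exactly when $A_B \cap M \ne \emptyset$. Thus, once $A_B$ has been extracted from $U(\mathcal{N}[G])$, scanning over $v \in V(G)$ yields a valid $v_B$. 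The main obstacle is computing $A_B$ purely from $U(\mathcal{N}[G])$: the defining condition $N[\{w\}] \subseteq B$ naively requires knowledge of the individual closed neighborhoods. The plan for resolving this is to invoke Lemma~\ref{lem: inclusion CN}, which recasts closed neighborhood containment as an intersection condition between vertices and elements of $U(\mathcal{N}[G])$; combined with the observation that $A_B$ is the (unique) largest subset of $V(G)$ whose closed neighborhood is contained in $B$, this allows $A_B$ to be identified by a bootstrap or fixed-point computation carried out entirely inside $U(\mathcal{N}[G])$.
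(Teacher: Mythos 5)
Your first stage is sound: the union basis $\mathcal{B}$ of $U(\mathcal{N}[G])$ is computable from the input, it coincides with the union basis of $\supp(\mathcal{N}[G])$, and your argument that each $B\in\mathcal{B}$ equals $N[v]$ for some vertex $v$ is correct. The fatal gap is exactly the step you flag as ``the main obstacle'': the set $A_B=\{w: N[w]\subseteq B\}$ is \emph{not} a function of $U(\mathcal{N}[G])$, so no bootstrap or fixed-point computation carried out inside $U(\mathcal{N}[G])$ can produce it. The paper's own example in Figure~\ref{fig_C4} already rules it out: the two labelings $1$--$2$--$3$--$4$--$1$ and $1$--$2$--$4$--$3$--$1$ of $C_4$ have the same support $\{\{1,2,3\},\{1,2,4\},\{1,3,4\},\{2,3,4\}\}$, hence the same $U(\mathcal{N}[G])$, and every $3$-set lies in the union basis; yet for $B=\{1,2,4\}$ one has $A_B=\{1\}$ in the first graph and $A_B=\{2\}$ in the second. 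Lemma~\ref{lem: inclusion CN} cannot rescue this, because it tests $N[A]\subseteq N[C]$ only when both sides are presented as closed neighborhoods of \emph{known} vertex sets, whereas your $B$ is a bare subset of $V(G)$ whose preimage under $N[\cdot]$ is precisely the information that $U(\mathcal{N}[G])$ fails to determine. Consequently the matching $B\mapsto v_B$ you need does not exist as a function of the input: in the example, the unique valid $v_B$ is $1$ for one graph and $2$ for the other.

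The repair is to avoid ever identifying $N[v]$ as an explicit member of $U(\mathcal{N}[G])$. The paper does this by working entirely on the vertex side: start with $S_0=V(G)$ and repeatedly delete from the current set $S_i$ any vertex $v$ with $N[v]=N[A]$ for some $A\subseteq S_i\setminus\{v\}$, a test that Corollary~\ref{coro: equal CN} supplies because both sides are closed neighborhoods of explicitly given vertex sets; the surviving set is a set of base vertices. Your scheme can be salvaged in the same spirit by replacing the matching with the vertex-side criterion ``$N[v]$ belongs to the union basis,'' i.e., there exist no vertex sets $C_1,\dots,C_k$ with $N[C_j]\subsetneq N[v]$ for all $j$ and $N[C_1\cup\dots\cup C_k]=N[v]$; this is decidable via Lemma~\ref{lem: inclusion CN} and Corollary~\ref{coro: equal CN}, and selecting one vertex from each equivalence class of such $v$ yields a set of base vertices. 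As written, however, your proof does not go through.
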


\begin{proof}
    Given $U(\mathcal{N}[G])$, by Corollary~\ref{coro: equal CN}, for each pair of vertex $v$ and vertex subset $A$ of $G$, we can determine whether $N[v] = N[A]$ or not. We construct a sequence $S_0,S_1,\dots,S_k$ as follows.
    \begin{enumerate}
        \item[(\romannum{1})] Let $S_0=V(G)$.
        \item[(\romannum{2})] For all $i\geq 0$, if $S_i$ contains a vertex $v$ such that $N[v] = N[A]$ for some $A \subseteq S_i \setminus \{v\}$, then let $S_{i+1}=S_i\setminus\{v\}$, otherwise
        \item[(\romannum{3})] let $k=i$, and terminate the sequence.
    \end{enumerate}
    It is straightforward to see that $S_k$ is a set of base vertices.
\end{proof}

The next result follows immediately from the previous lemma and Observation~\ref{obs:supp to U}.

\begin{corollary}
    Given $\supp(\mathcal{N}[G])$ of a graph $G$, one can find a set of base vertices of $G$.
\end{corollary}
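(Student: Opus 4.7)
The plan is to observe that this corollary is an immediate consequence of chaining the two preceding results, namely Observation~\ref{obs:supp to U} and Lemma~\ref{lem: base vtx}. So there is essentially nothing new to prove, only an assembly step to spell out.

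Concretely, I would start from the given data $\supp(\mathcal{N}[G])$ and first invoke Observation~\ref{obs:supp to U} to construct $U(\mathcal{N}[G])$; recall that $U(\mathcal{N}[G]) = \supp(\{N[A] : A \subseteq V(G)\})$ is precisely the collection of all unions of elements of $\supp(\mathcal{N}[G])$, so this step amounts to enumerating these unions. Once $U(\mathcal{N}[G])$ is in hand, I would apply Lemma~\ref{lem: base vtx} directly to extract a set of base vertices via the greedy deletion procedure described in that lemma's proof: starting with $S_0 = V(G)$ (which is itself readable off from $\bigcup U(\mathcal{N}[G])$), repeatedly discard a vertex $v$ whenever $N[v]$ coincides with $N[A]$ for some $A$ inside the current candidate set, checking the required equalities of closed neighborhoods via Corollary~\ref{coro: equal CN}.

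There is no real obstacle here; the only thing to flag is that all needed set-equality tests between closed neighborhoods are decidable from $U(\mathcal{N}[G])$ alone, as certified by Corollary~\ref{coro: equal CN}. The proof can therefore be written in one line: $\supp(\mathcal{N}[G]) \Rightarrow U(\mathcal{N}[G]) \Rightarrow$ a set of base vertices, citing Observation~\ref{obs:supp to U} and Lemma~\ref{lem: base vtx} in sequence.
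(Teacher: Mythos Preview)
Your proposal is correct and matches the paper's approach exactly: the paper states that the corollary follows immediately from Observation~\ref{obs:supp to U} and Lemma~\ref{lem: base vtx}, which is precisely the chaining you describe.
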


We remark that a graph can have different sets of base vertices, despite the uniqueness of the union basis,
since there may exist different vertices with the same closed neighborhoods that are in the union basis.
For example, if $G$ is a complete graph $K_n$ for some $n\geq 2$, then every single-vertex set is a set of base vertex, and there are $n$ such sets.

\section{Proof of Theorems~\ref{thm: C4 set} and~\ref{thm:DC=CN-recon}}

In this section, we prove Theorems~\ref{thm: C4 set} and~\ref{thm:DC=CN-recon}.  We first define some concepts and notation that we will use. Let $G$ be a graph and let $v$ be a vertex of $G$. To \textit{blow up $v$ into a clique of size $k$}, we first delete $v$ from $G$, 
and then add a set $\{v_1, v_2, \dots, v_k\}$ of new vertices that are mutually adjacent. 
For each $i \in \{1,2,\dots,k\}$, we make $v_i$ adjacent to every vertex in $N_G(v)$. For vertex subsets $S_1$ and $S_2$ of a graph $G$, the set $\mathit{E_G[S_1,S_2]}$ is the set of edges in $G$ that have one endpoint in $S_1$ and the other endpoint in $S_2$.

We prove the following result.

\begin{theorem}\label{thm:induced-subgraph-closed class}
     Suppose that $\mathcal{G}$ is an induced-subgraph-closed class of graphs, and $G$ is strongly reconstructible from $\cN[G]$ for all $G\in\mathcal{G}$. Then $G$ is strongly reconstructible from $\supp(\cN[G])$ for all $G\in\mathcal{G}$.
\end{theorem}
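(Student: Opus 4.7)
The plan is to reduce to the multiset hypothesis by restricting to an induced subgraph that keeps exactly one representative from each true-twin class. Fix $G \in \mathcal{G}$ and an arbitrary graph $H$ on the same vertex set $V$ with $\supp(\cN[G]) = \supp(\cN[H])$; the goal is $G=H$.

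First I would show that the relation $u \sim v \iff N[u]=N[v]$ is the same for $G$ and $H$. By Observation~\ref{obs:supp to U}, $U(\cN[G])=U(\cN[H])$. Applying Corollary~\ref{coro: equal CN} to the singletons $A=\{u\}, B=\{v\}$, the condition $N_G[u]=N_G[v]$ becomes ``for every $M\in U(\cN[G])$, $u\in M \iff v\in M$,'' which is identical to the analogous condition for $H$. Hence $\sim_G\,=\,\sim_H$. Denote the common equivalence classes by $V_1,\dots,V_k$; each is a clique in both $G$ and $H$ because any true twin of $u$ lies in $N[u]$. Pick a single representative $v_i\in V_i$ and set $S=\{v_1,\dots,v_k\}$.

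Next I would compare $G[S]$ and $H[S]$. Since $\mathcal{G}$ is induced-subgraph closed, $G[S]\in\mathcal{G}$. Because $N_G[v_i]$ is a union of $\sim_G$-classes, we have $N_{G[S]}[v_i]=N_G[v_i]\cap S$ consisting of exactly one rep per twin class contained in $N_G[v_i]$. Hence if $i\neq j$, any witness $x\in N_G[v_i]\setminus N_G[v_j]$ contributes its representative to $N_{G[S]}[v_i]\setminus N_{G[S]}[v_j]$, so the closed neighborhoods in $G[S]$ are pairwise distinct. The same is true for $H[S]$. Consequently $\cN[G[S]]=\supp(\cN[G[S]])$ and $\cN[H[S]]=\supp(\cN[H[S]])$, and a direct computation gives
\[
\supp(\cN[G[S]])=\{M\cap S : M\in\supp(\cN[G])\},
\]
with the symmetric statement for $H$. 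Since $\supp(\cN[G])=\supp(\cN[H])$, we obtain $\cN[G[S]]=\cN[H[S]]$ as multisets.

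Applying the theorem's hypothesis to $G[S]\in\mathcal{G}$ yields $G[S]=H[S]$ as labeled graphs on $S$. To finish, observe that for any two vertices $u\in V_p$, $v\in V_q$ with $u\neq v$, the edge $uv$ lies in a graph iff either $p=q$ (so both $V_p$-cliques contain $uv$) or $p\neq q$ and $v_pv_q$ is an edge of the reduced graph on $S$. Since the twin-class partition, the clique structure inside each class, and the reduced graph on $S$ all coincide between $G$ and $H$, we conclude $G=H$. I expect the main subtlety to be the argument that after restriction to $S$ the closed neighborhoods remain distinct; this relies crucially on the fact that $N_G[v]$ is a union of twin classes, which in turn rests on the symmetry of the twin relation.
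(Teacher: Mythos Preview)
Your proof is correct and follows essentially the same route as the paper: quotient by the true-twin relation $\sim$, observe that the quotient lies in $\mathcal{G}$ with pairwise distinct closed neighborhoods so that support equals multiset, apply the hypothesis there, and lift back by blowing up each twin class to a clique. The only cosmetic difference is framing: the paper phrases the argument as a reconstruction algorithm on the quotient graph $G'$ with vertex set $\{V_1,\dots,V_m\}$, whereas you compare $G[S]$ and $H[S]$ directly for an arbitrary $H$ with the same support.
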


\begin{proof}
    Given $\supp(\mathcal{N}[G])$ of a graph $G$ in $\mathcal{G}$, by Corollary~\ref{coro: equal CN}, for each pair of vertices $u$ and $v$, we can determine whether $N_G[u] = N_G[v]$ or not. For each pair of vertices $u$ and $v$, we define the equivalence relation $u\sim v$ if $N_G[u] = N_G[v]$. Let $V_1,V_2,\dots,V_m$ be the equivalence classes of $V(G)$ with respect to $\sim$. It follows that

    \begin{sublemma}\label{sublem:supp determines Vi}
        $V_1,V_2,\dots,V_m$ are determined from $\supp(\cN[G])$.
    \end{sublemma}
    
    Clearly, in $G$, every such equivalence class induces a clique. Note that if a set $A$ in $\supp(\mathcal{N}[G])$ intersects $V_i$ for some $i\in\{1,2,\dots,m\}$, then $V_i\subseteq A$. For each $A\in \supp(\mathcal{N}[G])$, we define
    \[\overline{A}=\{V_i:V_i\subseteq A \text{ and } i\in\{1,2,\dots,m\}\},\]
    and let $\mathcal{S}$ be the collection $\{\overline{A}:A\in \supp(\mathcal{N}[G])\}$. Clearly,
    \begin{sublemma}\label{sublem:supp determines S}
        $\mathcal{S}$ is determined from $\supp(\cN[G])$.
    \end{sublemma}
    
    Note that, for each $i\in\{1,2,\dots,m\}$ and each pair of vertices $a$ and $b$ in $V_i$, the closed neighborhood $N_G[a]$ of $a$ equals the closed neighborhood $N_G[b]$ of $b$. Therefore, $E_G[V_i,V_j]$
    is either complete or empty for all $j\in \{1,2,\dots,m\}\setminus\{i\}$. 
    Let $G'$ be the graph on vertex set $\{V_1,V_2,\dots,V_m\}$ where $V_i$ and $V_j$ are adjacent in $G'$ if and only if $E_G[V_i,V_j]$ is complete.

    Next we show that

    \begin{sublemma}\label{sublem:S=N[G']}
        $\mathcal{S}=\cN[G']$.
    \end{sublemma}
    Note that $\mathcal{S}$ equals the set $\{\overline{N_G[v]}: v\in V(G)\}$. Moreover, $\overline{N_G[x]}=\overline{N_G[y]}$ if $x\sim y$. For each $i \in \{1,2,\dots,m\}$, choose a representative vertex $v_i$ from the equivalence class $V_i$. It suffices to show that $\overline{N_G[v_i]}=N_{G'}[V_i]$.

    Suppose $V_j$ is adjacent to $V_i$ in $G'$. Then there is a vertex $x \in V_i$ adjacent to a vertex $y \in V_j$ in $G$. Hence, $E_G[V_i,V_j]$ is complete, $V_j \subseteq N_G[v_i]$, and so $V_j \in \overline{N_G[v_i]}$. Moreover, $V_i\subseteq N_G[v_i]$, and $V_i \in \overline{N_G[v_i]}$. Thus, $N_{G'}[V_i]\subseteq \overline{N_G[v_i]}$.
    
    Conversely, suppose $V_j\in \overline{N_G[v_i]}$. If $j=i$, then $V_j=V_i\in N_{G'}[V_i]$. Otherwise $j\neq i$, and there is a vertex $x$ in $V_j$ that is adjacent to a vertex $y$ in $V_i$. Hence $V_j$ is adjacent to $V_i$ in $G'$, so $V_j\in N_{G'}[V_i]$. Therefore,~\ref{sublem:S=N[G']} holds.\\
    
    Note that $G'$ can be obtained from $G$ by deleting all but one vertex in each equivalence class with respect to~$\sim$. Since $G$ belongs to an induced-subgraph-closed class~$\mathcal{G}$, it follows that $G' \in \mathcal{G}$ as well. Hence $G'$ is reconstructible from~$\mathcal{S}$, which equals~$\mathcal{N}[G']$. To obtain $G$ from $G'$, we blow up each vertex~$v_i$ into a clique of size~$|V_i|$ and label the new vertices by the elements of~$V_i$. By~\ref{sublem:supp determines Vi} and~\ref{sublem:supp determines S}, we conclude that $G$ is reconstructible from $\supp(\cN[G])$.
\end{proof}

Since $C_4$-free graphs form an induced-subgraph-closed class, Theorem~\ref{thm: C4 set} follows immediately from Theorems~\ref{thm: C4 multiset} and~\ref{thm:induced-subgraph-closed class}. The following example illustrates the reconstruction process of the $C_4$-free graph $G$ shown in Figure~\ref{fig_example_1}.

 \begin{figure}[htb]
\hbox to \hsize{
\hfil
\resizebox{4.5cm}{!}{\tikzset{every picture/.style={line width=0.75pt}} 

\begin{tikzpicture}[x=0.75pt,y=0.75pt,yscale=-1,xscale=1]

\draw  [fill={rgb, 255:red, 0; green, 0; blue, 0 }  ,fill opacity=1 ] (482.75,71.6) .. controls (482.75,69.7) and (484.3,68.15) .. (486.2,68.15) .. controls (488.1,68.15) and (489.65,69.7) .. (489.65,71.6) .. controls (489.65,73.5) and (488.1,75.05) .. (486.2,75.05) .. controls (484.3,75.05) and (482.75,73.5) .. (482.75,71.6) -- cycle ;
\draw  [fill={rgb, 255:red, 0; green, 0; blue, 0 }  ,fill opacity=1 ] (401.55,133) .. controls (401.55,131.1) and (403.1,129.55) .. (405,129.55) .. controls (406.9,129.55) and (408.45,131.1) .. (408.45,133) .. controls (408.45,134.9) and (406.9,136.45) .. (405,136.45) .. controls (403.1,136.45) and (401.55,134.9) .. (401.55,133) -- cycle ;
\draw  [fill={rgb, 255:red, 0; green, 0; blue, 0 }  ,fill opacity=1 ] (479.75,98.6) .. controls (479.75,96.7) and (481.3,95.15) .. (483.2,95.15) .. controls (485.1,95.15) and (486.65,96.7) .. (486.65,98.6) .. controls (486.65,100.5) and (485.1,102.05) .. (483.2,102.05) .. controls (481.3,102.05) and (479.75,100.5) .. (479.75,98.6) -- cycle ;
\draw  [fill={rgb, 255:red, 0; green, 0; blue, 0 }  ,fill opacity=1 ] (444.46,186.35) .. controls (444.46,184.45) and (446,182.91) .. (447.9,182.91) .. controls (449.81,182.91) and (451.35,184.45) .. (451.35,186.35) .. controls (451.35,188.26) and (449.81,189.8) .. (447.9,189.8) .. controls (446,189.8) and (444.46,188.26) .. (444.46,186.35) -- cycle ;
\draw    (405,133) -- (447.9,186.35) ;
\draw    (486.2,71.6) -- (405,133) ;
\draw    (447.9,186.35) -- (510.7,154) ;
\draw    (405,133) -- (483.2,98.6) ;
\draw  [fill={rgb, 255:red, 0; green, 0; blue, 0 }  ,fill opacity=1 ] (452.75,216.6) .. controls (452.75,214.7) and (454.3,213.15) .. (456.2,213.15) .. controls (458.1,213.15) and (459.65,214.7) .. (459.65,216.6) .. controls (459.65,218.5) and (458.1,220.05) .. (456.2,220.05) .. controls (454.3,220.05) and (452.75,218.5) .. (452.75,216.6) -- cycle ;
\draw  [fill={rgb, 255:red, 0; green, 0; blue, 0 }  ,fill opacity=1 ] (435.4,204.6) .. controls (435.4,202.7) and (436.95,201.15) .. (438.85,201.15) .. controls (440.75,201.15) and (442.3,202.7) .. (442.3,204.6) .. controls (442.3,206.5) and (440.75,208.05) .. (438.85,208.05) .. controls (436.95,208.05) and (435.4,206.5) .. (435.4,204.6) -- cycle ;
\draw  [fill={rgb, 255:red, 0; green, 0; blue, 0 }  ,fill opacity=1 ] (507.25,154) .. controls (507.25,152.1) and (508.8,150.55) .. (510.7,150.55) .. controls (512.6,150.55) and (514.15,152.1) .. (514.15,154) .. controls (514.15,155.9) and (512.6,157.45) .. (510.7,157.45) .. controls (508.8,157.45) and (507.25,155.9) .. (507.25,154) -- cycle ;
\draw    (405,133) -- (510.7,154) ;
\draw    (483.2,98.6) -- (510.7,154) ;
\draw    (510.7,154) -- (486.2,71.6) ;
\draw    (485.7,72.55) -- (483.2,98.6) ;
\draw    (405,133) -- (438.85,204.6) ;
\draw    (405,133) -- (456.2,216.6) ;
\draw    (510.7,154) -- (456.2,216.6) ;
\draw    (510.7,154) -- (438.85,204.6) ;
\draw    (447.9,186.35) -- (456.2,216.6) ;
\draw    (447.9,186.35) -- (438.85,204.6) ;
\draw    (438.85,204.6) -- (456.2,216.6) ;
\draw  [dash pattern={on 4.5pt off 4.5pt}] (427.05,201.48) .. controls (427.05,187.67) and (438.25,176.48) .. (452.05,176.48) .. controls (465.86,176.48) and (477.05,187.67) .. (477.05,201.48) .. controls (477.05,215.28) and (465.86,226.48) .. (452.05,226.48) .. controls (438.25,226.48) and (427.05,215.28) .. (427.05,201.48) -- cycle ;
\draw  [dash pattern={on 4.5pt off 4.5pt}] (459.45,85.57) .. controls (459.45,71.77) and (470.64,60.57) .. (484.45,60.57) .. controls (498.26,60.57) and (509.45,71.77) .. (509.45,85.57) .. controls (509.45,99.38) and (498.26,110.57) .. (484.45,110.57) .. controls (470.64,110.57) and (459.45,99.38) .. (459.45,85.57) -- cycle ;
\draw  [fill={rgb, 255:red, 0; green, 0; blue, 0 }  ,fill opacity=1 ] (352.75,131.6) .. controls (352.75,129.7) and (354.3,128.15) .. (356.2,128.15) .. controls (358.1,128.15) and (359.65,129.7) .. (359.65,131.6) .. controls (359.65,133.5) and (358.1,135.05) .. (356.2,135.05) .. controls (354.3,135.05) and (352.75,133.5) .. (352.75,131.6) -- cycle ;
\draw    (356.2,131.6) -- (405,133) ;

\draw (393,114.4) node [anchor=north west][inner sep=0.75pt]    {$1$};
\draw (345,112.4) node [anchor=north west][inner sep=0.75pt]    {$5$};
\draw (481,48.4) node [anchor=north west][inner sep=0.75pt]    {$6$};
\draw (516.15,162.4) node [anchor=north west][inner sep=0.75pt]    {$3$};
\draw (461.65,225) node [anchor=north west][inner sep=0.75pt]    {$4$};
\draw (471,115.4) node [anchor=north west][inner sep=0.75pt]    {$2$};
\draw (442,163.4) node [anchor=north west][inner sep=0.75pt]    {$8$};
\draw (412,201.4) node [anchor=north west][inner sep=0.75pt]    {$7$};

\end{tikzpicture}}%
\hfil
}
\caption{A $C_4$-free graph $G$.}
\label{fig_example_1}
\end{figure}
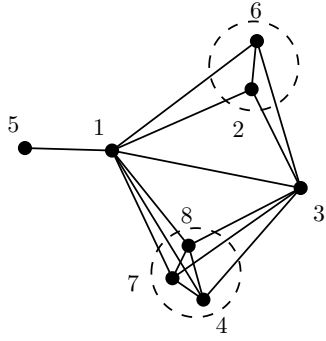

\begin{example} We are given the support $\supp(\mathcal{N}[G])$, 
\[
\big\{\{1,2,3,4,5,6,7,8\},\ \{1,2,3,6\},\ \{1,2,3,4,6,7,8\},\ \{1,3,4,7,8\},\ \{1,5\}\big\},
\]
of the closed neighborhoods of a $C_4$-free graph $G$, and we proceed to reconstruct $G$ as follows.

\begin{enumerate}
    \item[(\romannum{1})] By Corollary~\ref{coro: equal CN}, for every pair of vertices $x$ and $y$ in $G$, we can determine if they have the same set of closed neighborhoods. Let $x\sim y$, if $N_G[x]=N_G[y]$. We partition $V(G)$ into the following equivalence classes,
    \[\{1\},\ \{2,6\},\ \{3\},\ \{4,7,8\},\ \{5\}.\]
    \item[(\romannum{2})] Let $V_1=\{1\}, V_2=\{2,6\}, V_3=\{3\}, V_4=\{4,7,8\},$ and $V_5=\{5\}$. From $\supp(\mathcal{N}[G])$, we define the collection $\mathcal{S}$ as 
    \[\hspace*{1.1cm}\big\{\{V_1,V_2,V_3,V_4,V_5\},\ \{V_1,V_2,V_3\},\ \{V_1,V_2,V_3,V_4\},\ \{V_1,V_3,V_4\},\ \{V_1,V_5\}\big\}.\] We know $\mathcal{S}$ equals $\cN[G']$ for some $C_4$-free graph $G$.
    \item[(\romannum{3})] By Theorem~\ref{thm: C4 multiset}, we can reconstruct $G'$ from $\mathcal{S}$ as shown in Figure~\ref{fig_example_2}.

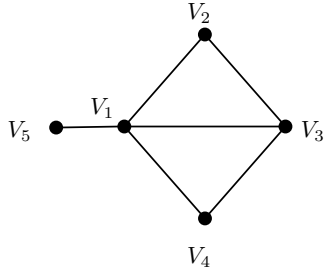
\begin{figure}[htb]
\hbox to \hsize{
\hfil
\resizebox{4.5cm}{!}{\tikzset{every picture/.style={line width=0.75pt}} 

\begin{tikzpicture}[x=0.75pt,y=0.75pt,yscale=-1,xscale=1]

\draw  [fill={rgb, 255:red, 0; green, 0; blue, 0 }  ,fill opacity=1 ] (185.4,70.6) .. controls (185.4,68.7) and (186.95,67.15) .. (188.85,67.15) .. controls (190.75,67.15) and (192.3,68.7) .. (192.3,70.6) .. controls (192.3,72.5) and (190.75,74.05) .. (188.85,74.05) .. controls (186.95,74.05) and (185.4,72.5) .. (185.4,70.6) -- cycle ;
\draw  [fill={rgb, 255:red, 0; green, 0; blue, 0 }  ,fill opacity=1 ] (139.55,123) .. controls (139.55,121.1) and (141.1,119.55) .. (143,119.55) .. controls (144.9,119.55) and (146.45,121.1) .. (146.45,123) .. controls (146.45,124.9) and (144.9,126.45) .. (143,126.45) .. controls (141.1,126.45) and (139.55,124.9) .. (139.55,123) -- cycle ;
\draw  [fill={rgb, 255:red, 0; green, 0; blue, 0 }  ,fill opacity=1 ] (231.25,123) .. controls (231.25,121.1) and (232.8,119.55) .. (234.7,119.55) .. controls (236.6,119.55) and (238.15,121.1) .. (238.15,123) .. controls (238.15,124.9) and (236.6,126.45) .. (234.7,126.45) .. controls (232.8,126.45) and (231.25,124.9) .. (231.25,123) -- cycle ;
\draw  [fill={rgb, 255:red, 0; green, 0; blue, 0 }  ,fill opacity=1 ] (185.46,175.35) .. controls (185.46,173.45) and (187,171.91) .. (188.9,171.91) .. controls (190.81,171.91) and (192.35,173.45) .. (192.35,175.35) .. controls (192.35,177.26) and (190.81,178.8) .. (188.9,178.8) .. controls (187,178.8) and (185.46,177.26) .. (185.46,175.35) -- cycle ;
\draw    (143,123) -- (188.9,175.35) ;
\draw    (188.8,70.65) -- (234.7,123) ;
\draw    (234.7,123) -- (188.9,175.35) ;
\draw    (143,123) -- (188.85,70.6) ;
\draw    (143,123) -- (234.7,123) ;
\draw    (143.05,122.95) -- (104.2,123.6) ;
\draw  [fill={rgb, 255:red, 0; green, 0; blue, 0 }  ,fill opacity=1 ] (100.75,123.6) .. controls (100.75,121.7) and (102.3,120.15) .. (104.2,120.15) .. controls (106.1,120.15) and (107.65,121.7) .. (107.65,123.6) .. controls (107.65,125.5) and (106.1,127.05) .. (104.2,127.05) .. controls (102.3,127.05) and (100.75,125.5) .. (100.75,123.6) -- cycle ;

\draw (122,105.4) node [anchor=north west][inner sep=0.75pt]    {$V_{1}$};
\draw (177.25,51.4) node [anchor=north west][inner sep=0.75pt]    {$V_{2}$};
\draw (242,118.4) node [anchor=north west][inner sep=0.75pt]    {$V_{3}$};
\draw (177.25,190.4) node [anchor=north west][inner sep=0.75pt]    {$V_{4}$};
\draw (75,118.4) node [anchor=north west][inner sep=0.75pt]    {$V_{5}$};

\end{tikzpicture}}%
\hfil
}
\caption{$G'$ can be reconstructed from $\mathcal{S}$, which equals $\cN[G']$.}
\label{fig_example_2}
\end{figure}

    \item[(\romannum{4})] To complete the reconstruction of $G$, for each $i \in \{1,2,3,4,5\}$, we blow up $V_i$ into a clique of size $|V_i|$ and label the vertices by the elements of $V_i$. 
    The resulting graph is $G$. \hfill \qed

\end{enumerate}
    
\end{example}

\noindent It remains to complete the proof of Theorem~\ref{thm:DC=CN-recon}.

\begin{proof}   [Proof of Theorem~\ref{thm:DC=CN-recon}]
   Let $G$ be a graph with vertex set $V$. By Proposition~\ref{prop:DC&nbhd}, $U(\mathcal{N}[G])$ is precisely the set of complements of the sets in $\mathscr{D}(G)$.
    Suppose that $G$ is reconstructible from $\mathscr{D}(G)$. Given $\supp(\mathcal{N}[G])$, by Observation~\ref{obs:supp to U}, we can determine $U(\mathcal{N}[G])$ and hence determine $\mathscr{D}(G)$.
    Therefore, $G$ is reconstructible from $\supp(\mathcal{N}[G])$.
    
    Conversely, suppose that $G$ is reconstructible from $\supp(\mathcal{N}[G])$.
    We argue by induction on $|V(G)|$.
    For $|V(G)| = 1$, the statement is trivially true.
    Suppose that the graph $G$ has at least two vertices and the statement holds for every graph $G'$ of order less than $G$.
    
    Given $\mathscr{D}(G)$,  we can determine $U(\mathcal{N}[G])$. By Lemma~\ref{lem: base vtx}, we can find a set $S$ of base vertices of $G$. Let $G'$ be the subgraph of $G$ induced on $S$.
    If $S = V(G)$, then $G' = G$ and $\supp(\mathcal{N}[G]) = \mathcal{N}[G] = \{N[v] : v \in S\}$.
    Since $G$ is reconstructible from $\supp(\mathcal{N}[G])$ by assumption, then $G$ is reconstructible.
    If $S \subsetneq V(G)$, then $G'$ is reconstructible from $\supp(\mathcal{N}[{G'}]) = \{N[v] : v \in S\}$.
    For each vertex $v \in V(G) \setminus V(G')$, there is a subset $A$ of $S$ such that $N_G[v] = N_G[A]$. By Corollary~\ref{coro: equal CN}, the closed neighborhood of every vertex in $G$ can be determined. Therefore, $G$ is reconstructible from $\mathscr{D}(G)$.
\end{proof}

\section*{Concluding remarks}

In this paper, we have shown that every $C_4$-free graph is strongly reconstructible from either of the support $\supp(\mathcal{N}[G])$ of its closed neighborhoods or its digital convexity $\mathscr{D}(G)$. Indeed, the class of $C_4$-free graphs is the maximal induced-subgraph-closed class in which every graph is reconstructible from its set of closed neighborhoods. To see this, first we observe that every induced-subgraph-closed class that is not a subclass of the class of $C_4$-free graphs must contain $C_4$ as a member. However, a labeled $C_4$ is not strongly reconstructible from its set of closed neighborhoods. Figure~\ref{fig_C4} shows all three distinct labelings of $C_4$, each having the same set of closed neighborhoods.

\begin{figure}[htb]
\hbox to \hsize{
\hfil
\resizebox{8cm}{!}{\tikzset{every picture/.style={line width=0.75pt}} 

\begin{tikzpicture}[x=0.75pt,y=0.75pt,yscale=-1,xscale=1]

\draw  [fill={rgb, 255:red, 0; green, 0; blue, 0 }  ,fill opacity=1 ] (104.55,140) .. controls (104.55,138.1) and (106.1,136.55) .. (108,136.55) .. controls (109.9,136.55) and (111.45,138.1) .. (111.45,140) .. controls (111.45,141.9) and (109.9,143.45) .. (108,143.45) .. controls (106.1,143.45) and (104.55,141.9) .. (104.55,140) -- cycle ;
\draw   (108,90) -- (158,90) -- (158,140) -- (108,140) -- cycle ;
\draw  [fill={rgb, 255:red, 0; green, 0; blue, 0 }  ,fill opacity=1 ] (104.55,90) .. controls (104.55,88.1) and (106.1,86.55) .. (108,86.55) .. controls (109.9,86.55) and (111.45,88.1) .. (111.45,90) .. controls (111.45,91.9) and (109.9,93.45) .. (108,93.45) .. controls (106.1,93.45) and (104.55,91.9) .. (104.55,90) -- cycle ;
\draw  [fill={rgb, 255:red, 0; green, 0; blue, 0 }  ,fill opacity=1 ] (154.55,90) .. controls (154.55,88.1) and (156.1,86.55) .. (158,86.55) .. controls (159.9,86.55) and (161.45,88.1) .. (161.45,90) .. controls (161.45,91.9) and (159.9,93.45) .. (158,93.45) .. controls (156.1,93.45) and (154.55,91.9) .. (154.55,90) -- cycle ;
\draw  [fill={rgb, 255:red, 0; green, 0; blue, 0 }  ,fill opacity=1 ] (154.55,140) .. controls (154.55,138.1) and (156.1,136.55) .. (158,136.55) .. controls (159.9,136.55) and (161.45,138.1) .. (161.45,140) .. controls (161.45,141.9) and (159.9,143.45) .. (158,143.45) .. controls (156.1,143.45) and (154.55,141.9) .. (154.55,140) -- cycle ;

\draw  [fill={rgb, 255:red, 0; green, 0; blue, 0 }  ,fill opacity=1 ] (222.05,140) .. controls (222.05,138.1) and (223.6,136.55) .. (225.5,136.55) .. controls (227.4,136.55) and (228.95,138.1) .. (228.95,140) .. controls (228.95,141.9) and (227.4,143.45) .. (225.5,143.45) .. controls (223.6,143.45) and (222.05,141.9) .. (222.05,140) -- cycle ;
\draw   (225.5,90) -- (275.5,90) -- (275.5,140) -- (225.5,140) -- cycle ;
\draw  [fill={rgb, 255:red, 0; green, 0; blue, 0 }  ,fill opacity=1 ] (222.05,90) .. controls (222.05,88.1) and (223.6,86.55) .. (225.5,86.55) .. controls (227.4,86.55) and (228.95,88.1) .. (228.95,90) .. controls (228.95,91.9) and (227.4,93.45) .. (225.5,93.45) .. controls (223.6,93.45) and (222.05,91.9) .. (222.05,90) -- cycle ;
\draw  [fill={rgb, 255:red, 0; green, 0; blue, 0 }  ,fill opacity=1 ] (272.05,90) .. controls (272.05,88.1) and (273.6,86.55) .. (275.5,86.55) .. controls (277.4,86.55) and (278.95,88.1) .. (278.95,90) .. controls (278.95,91.9) and (277.4,93.45) .. (275.5,93.45) .. controls (273.6,93.45) and (272.05,91.9) .. (272.05,90) -- cycle ;
\draw  [fill={rgb, 255:red, 0; green, 0; blue, 0 }  ,fill opacity=1 ] (272.05,140) .. controls (272.05,138.1) and (273.6,136.55) .. (275.5,136.55) .. controls (277.4,136.55) and (278.95,138.1) .. (278.95,140) .. controls (278.95,141.9) and (277.4,143.45) .. (275.5,143.45) .. controls (273.6,143.45) and (272.05,141.9) .. (272.05,140) -- cycle ;

\draw  [fill={rgb, 255:red, 0; green, 0; blue, 0 }  ,fill opacity=1 ] (339.55,140) .. controls (339.55,138.1) and (341.1,136.55) .. (343,136.55) .. controls (344.9,136.55) and (346.45,138.1) .. (346.45,140) .. controls (346.45,141.9) and (344.9,143.45) .. (343,143.45) .. controls (341.1,143.45) and (339.55,141.9) .. (339.55,140) -- cycle ;
\draw   (343,90) -- (393,90) -- (393,140) -- (343,140) -- cycle ;
\draw  [fill={rgb, 255:red, 0; green, 0; blue, 0 }  ,fill opacity=1 ] (339.55,90) .. controls (339.55,88.1) and (341.1,86.55) .. (343,86.55) .. controls (344.9,86.55) and (346.45,88.1) .. (346.45,90) .. controls (346.45,91.9) and (344.9,93.45) .. (343,93.45) .. controls (341.1,93.45) and (339.55,91.9) .. (339.55,90) -- cycle ;
\draw  [fill={rgb, 255:red, 0; green, 0; blue, 0 }  ,fill opacity=1 ] (389.55,90) .. controls (389.55,88.1) and (391.1,86.55) .. (393,86.55) .. controls (394.9,86.55) and (396.45,88.1) .. (396.45,90) .. controls (396.45,91.9) and (394.9,93.45) .. (393,93.45) .. controls (391.1,93.45) and (389.55,91.9) .. (389.55,90) -- cycle ;
\draw  [fill={rgb, 255:red, 0; green, 0; blue, 0 }  ,fill opacity=1 ] (389.55,140) .. controls (389.55,138.1) and (391.1,136.55) .. (393,136.55) .. controls (394.9,136.55) and (396.45,138.1) .. (396.45,140) .. controls (396.45,141.9) and (394.9,143.45) .. (393,143.45) .. controls (391.1,143.45) and (389.55,141.9) .. (389.55,140) -- cycle ;

\draw (90.23,78.75) node [anchor=north west][inner sep=0.75pt]    {$1$};
\draw (166.23,78.75) node [anchor=north west][inner sep=0.75pt]    {$2$};
\draw (90.23,134.75) node [anchor=north west][inner sep=0.75pt]    {$4$};
\draw (166.23,134.75) node [anchor=north west][inner sep=0.75pt]    {$3$};
\draw (207.73,78.75) node [anchor=north west][inner sep=0.75pt]    {$1$};
\draw (283.73,78.75) node [anchor=north west][inner sep=0.75pt]    {$2$};
\draw (207.73,134.75) node [anchor=north west][inner sep=0.75pt]    {$3$};
\draw (283.73,134.75) node [anchor=north west][inner sep=0.75pt]    {$4$};
\draw (325.23,78.75) node [anchor=north west][inner sep=0.75pt]    {$1$};
\draw (401.23,78.75) node [anchor=north west][inner sep=0.75pt]    {$3$};
\draw (325.23,134.75) node [anchor=north west][inner sep=0.75pt]    {$4$};
\draw (401.23,134.75) node [anchor=north west][inner sep=0.75pt]    {$2$};

\end{tikzpicture}}%
\hfil
}
\caption{$C_4$ with three different labelings.}
\label{fig_C4}
\end{figure}
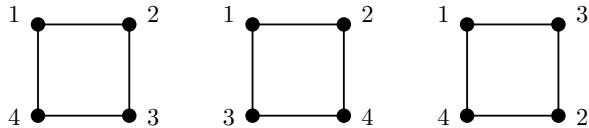

On the other hand, the $C_4$-free condition is not necessary for such reconstructions. For instance, it is straightforward to verify that the graph in Figure~\ref{fig_contains_C4} is the unique graph whose set of closed neighborhoods is
\[
\big\{\{1,2,4,5\},\ \{1,2,3\},\ \{2,3,4\},\ \{1,3,4\},\ \{1,5\}\big\},
\]
and that $G$ contains an induced $C_4$. Therefore, providing a precise characterization of graphs that are reconstructible from the support of their closed neighborhoods remains an open problem.
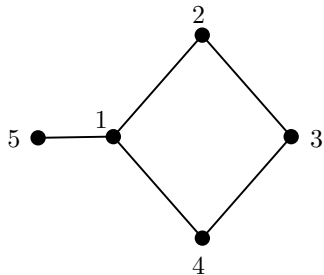
\begin{figure}[htb]
\hbox to \hsize{
\hfil
\resizebox{4.5cm}{!}{\tikzset{every picture/.style={line width=0.75pt}} 

\begin{tikzpicture}[x=0.75pt,y=0.75pt,yscale=-1,xscale=1]

\draw  [fill={rgb, 255:red, 0; green, 0; blue, 0 }  ,fill opacity=1 ] (205.4,90.6) .. controls (205.4,88.7) and (206.95,87.15) .. (208.85,87.15) .. controls (210.75,87.15) and (212.3,88.7) .. (212.3,90.6) .. controls (212.3,92.5) and (210.75,94.05) .. (208.85,94.05) .. controls (206.95,94.05) and (205.4,92.5) .. (205.4,90.6) -- cycle ;
\draw  [fill={rgb, 255:red, 0; green, 0; blue, 0 }  ,fill opacity=1 ] (159.55,143) .. controls (159.55,141.1) and (161.1,139.55) .. (163,139.55) .. controls (164.9,139.55) and (166.45,141.1) .. (166.45,143) .. controls (166.45,144.9) and (164.9,146.45) .. (163,146.45) .. controls (161.1,146.45) and (159.55,144.9) .. (159.55,143) -- cycle ;
\draw  [fill={rgb, 255:red, 0; green, 0; blue, 0 }  ,fill opacity=1 ] (251.25,143) .. controls (251.25,141.1) and (252.8,139.55) .. (254.7,139.55) .. controls (256.6,139.55) and (258.15,141.1) .. (258.15,143) .. controls (258.15,144.9) and (256.6,146.45) .. (254.7,146.45) .. controls (252.8,146.45) and (251.25,144.9) .. (251.25,143) -- cycle ;
\draw  [fill={rgb, 255:red, 0; green, 0; blue, 0 }  ,fill opacity=1 ] (205.46,195.35) .. controls (205.46,193.45) and (207,191.91) .. (208.9,191.91) .. controls (210.81,191.91) and (212.35,193.45) .. (212.35,195.35) .. controls (212.35,197.26) and (210.81,198.8) .. (208.9,198.8) .. controls (207,198.8) and (205.46,197.26) .. (205.46,195.35) -- cycle ;
\draw    (163,143) -- (208.9,195.35) ;
\draw    (208.8,90.65) -- (254.7,143) ;
\draw    (254.7,143) -- (208.9,195.35) ;
\draw    (163,143) -- (208.85,90.6) ;
\draw    (163.05,142.95) -- (124.2,143.6) ;
\draw  [fill={rgb, 255:red, 0; green, 0; blue, 0 }  ,fill opacity=1 ] (120.75,143.6) .. controls (120.75,141.7) and (122.3,140.15) .. (124.2,140.15) .. controls (126.1,140.15) and (127.65,141.7) .. (127.65,143.6) .. controls (127.65,145.5) and (126.1,147.05) .. (124.2,147.05) .. controls (122.3,147.05) and (120.75,145.5) .. (120.75,143.6) -- cycle ;

\draw (152,128.4) node [anchor=north west][inner sep=0.75pt]    {$1$};
\draw (202.23,74.4) node [anchor=north west][inner sep=0.75pt]    {$2$};
\draw (263,138.4) node [anchor=north west][inner sep=0.75pt]    {$3$};
\draw (202.23,203.75) node [anchor=north west][inner sep=0.75pt]    {$4$};
\draw (107,138.4) node [anchor=north west][inner sep=0.75pt]    {$5$};

\end{tikzpicture}}%
\hfil
}
\caption{$G$ contains an induced $C_4$.}
\label{fig_contains_C4}
\end{figure}

\subsection*{Acknowledgments.} This project was initiated at the Graduate Research Workshop in Combinatorics (GRWC) held at the University of Wisconsin-Milwaukee in 2024, which was supported by the Combinatorics Foundation and the National Science Foundation (NSF Grant DMS-1953445). Borgwardt's work was supported by the Air Force Office of Scientific Research under award number FA9550-24-1-0240. We thank Jacob Dunham, Alice Lacaze-Masmonteil and Chelsea Sato for helpful discussions in the early stage of this project. We also thank Bryce Frederickson for helping us find the first pair of non-isomorphic graphs that share the same sets of digital convexity.
\bibliographystyle{abbrv}
\bibliography{References}

\end{document}